\author{Dmitry Vaintrob}
\renewcommand{\t}[1]{\mathrm{#1}}
\newcommand{\Shf}{\mathrm{Shf}}
\renewcommand{\top}{\mathrm{Top}}
\renewcommand{\phi}{\varphi}
\renewcommand{\vec}{\overrightarrow}
\renewcommand{\F}{\mathcal{F}}
\renewcommand{\X}{\mathcal{X}}
\newcommand{\Y}{\mathcal{Y}}
\newcommand{\I}{\mathcal{I}}
\renewcommand{\L}{\mathcal{L}}
\renewcommand{\log}{\mathrm{log}}
\newcommand{\fld}{\mathrm{FLD}}
\newcommand{\FLD}{\mathrm{FLD}}
\newcommand{\LD}{\mathrm{LD}}
\newcommand{\DMK}{\mathrm{DMK}}
\newcommand{\dlog}{\operatorname{dlog}}
\newcommand{\forg}{\mathtt{forg}}
\newcommand{\comp}{\operatorname{comp}}
\newcommand{\fldlog}{{\mathrm{FLC}}}
\newcommand{\cech}{\text{\v{C}ech}}
\newcommand{\sing}{\text{sing}}
\title{Formality of little disks and algebraic geometry}
\begin{document}

\maketitle
\abstract{We construct a canonical chain of formality quasiisomorphisms for the operad of chains on framed little disks and the operad of chains on little disks. The construction is done in terms of logarithmic algebraic geometry and is remarkable for being rational (and indeed definable integrally) in de Rham cohomology. 
}

\newpage
\tableofcontents
\newpage

\section{Introduction}
Let $\fld$ be the operad of framed little disks. The author showed in \cite{ffc} that $\fld$ is equivalent (via a chain of quasiisomorphisms of topological operads) to the topological realization of an operad $\fldlog$ (``framed little curves'') of operads in logarithmic algebraic geometry in the sense of Kato. In this paper we write down explicitly the log spaces and maps involved, and show that these spaces have an incredibly well-behaved Hodge splitting property (i.e.\ quasiisomorphism between cohomology of forms and de Rham cohomology), which we call the \emph{proper acyclicity} property, something that is only possible in the log geometry context. From this property we deduce an explicit formality splitting quasiisomorphism between dg operads $H_*(\fldlog, \cc)$ and $C_*(\fldlog,\cc)$. We deduce similar splittings for the operad $LD$ of little disks as a consequence. Our construction is canonical (in an appropriate derived sense) and explicit. It does not depend on a choice of an associator or a Frobenius element in the Grothendieck-Teichm\"uller group, or a choice of obstruction-theoretic splitting. Its algebro-geometric provenance also gives it automatic compatibility with a number of structures. In particular this splitting is rational (and indeed definable integrally) for the ``de Rham'' rational structure on the operad $\fld$ (the de Rham structure on the complex $C^*(\fld)$ as well as on $C^*(LD)$ is part of a larger known derived mixed Hodge structure on these dg operads: see for exampe \cite{ch_mixed} for a definition in terms of the Grothendieck-Teichm\"uller group, or \cite{ffc} for a log algebro-geometric definition.) No explicit splitting that has previously been constructed was known to be rational in any lattice, though rational splittings have been known to exist by certain standard lifting theorems. Our rationality implies compatibility of this formality quasiisomorphism with the theory of derived vertex algebras with rational coefficients, and (via results of Vallette and Drummond-Cole) with rational structure on the genus $0$ Deligne-Mumford-Knutsen operad. In the upcoming paper \cite{associators}, the author shows that this formality isomorphism has an interesting and new deformation in the presence of monoidal structure determined by an associator.

\subsection{Relation to previous work} A formality splitting for the operad of little disks was first proven to exist by Dmitry Tamarkin using path integral ideas of Kontsevich in \cite{tamarkin_formality}, then using Drinfeld associators in \cite{tamarkin_quantization}. Tamarkin's proof was extended to the operad of framed little disks independently by Giansiracusa and Salvatore in \cite{gs_formality} and by Severa in \cite{severa}. A differently flavored proof, using Grothendieck-Teichm\"uller action, was given in \cite{cirici-horel}. A splitting similar to the one constructed here was sketched out in an unpublished short letter of Beilinson to Kontsevich, \cite{beilinson_let}, and it would not be wrong to say that the present paper is a formalization of an idea of Beilinson.


\subsection{Idea of proof and structure of paper}
For convenience we construct the formality quasiisomorphism in a dual context, i.e., for the cooperad $C^*(\fld,\cc)$ of
cochains instead of chains. We construct the formality quasiisomorphism via a \emph{log de Rham} comparison, $$H^*(\fld, \cc)\to C_{dR}^*(\fldlog)$$ for $\fldlog$ our log algebro-geometric model for framed little disks. Here the formality follows from the following two observations, each of which follow from the \emph{proper acyclicity} property of the log spaces $\fld_n^{log}$: \begin{enumerate}
\item For each $n$, the log space of operations $\fld_n^{log}$ satisfies  $$H^p\Omega^q(\fld_n) \cong \begin{cases}H^q(\fld_n, \cc), & p = 0\\ 0, & p\ge 1. 
\end{cases}.$$
\item The de Rham differentials $d:H^0\Omega^q\to H^0\Omega^{q+1}$ are zero. 
\end{enumerate}
In other words, the Hodge to de Rham spectral sequence degenerates at the $E1$ term and only has one row, which implies formality of the cooperad $C^*_{dR}(\fldlog).$ We then compare $C^*_{dR}(\fldlog)$ with $C^*(\fld,\cc)$ by a topological argument: the so-called ``Kato-Nakayama'' topological realization $\fldlog^{top}$ of $\fldlog$ is related by a chain of homotopy equivalences of topological operads to $\fld$, and $C^*_{dR}(\fldlog)$ is equivalent to the topological, i.e., Betti cohomology $C^*(\fldlog^{top}, \cc)$ by a Betti-to-de Rham comparison.

The central sections of this paper are Section \ref{sec:log_disks}, where we introduce the relevant logarithmic geometry objects and Section \ref{sec:hodge_thy}, where we deduce formality from their Hodge theoretic properties. It is our goal whenever possible, and especially in these central sections, to be maximally explicit about the algebra and topology, and to assume no prior knowledge of logarithmic geometry. Section \ref{sec:log_disks} opens with a brief ``minimalist'' introduction into the category of ``normal-crossings'' logarithmic schemes (a convenient subcategory sufficient for our purposes) in Section \ref{sec:log_intro}, and proceeds to define the spaces and morphisms comprising the operad $\fldlog$. 

In Section \ref{sec:hodge_thy}, we discuss de Rham cohomology and Hodge theory of logarithmic varieties, and introduce the proper acyclicity property. We prove that the spaces comprising the operad $\fldlog$ are proper acyclic. In section \ref{sec:ld}, we describe a model for the operad of little disks as a fiber of a map of (proper acyclic) log operads, and deduce an explicit formality isomorphism for this operad; this part makes use of certain supplementary structures on the operad $\fldlog$ described in section \ref{sec:forgetful}. 

\subsection{The main theorems}\label{sec:results}
The main result of this paper is a canonical formality isomorphism for the (chain complex) operads of framed and unframed little disks.
Let $LD, FLD$ be the operads of unframed and framed little disks, respectively. We define the $\infty$-category of operads of chain complexes over $\cc$ to be the $\infty$-category localization of the ordinary category of operads of chain complexes by maps which are levelwise quasiisomorphisms.\footnote{By results of H.\ Chu, D.-C.\ Cisinski, G.\ Heuts, V.\ Hinich, I.\ Moerdijk and others the $\infty$-category underlying this model category is equivalent to Lurie's category of (single-colored) $\infty$-operads.} 
\begin{thm}
There are formality isomorphisms in the $\infty$-category of operads of chain complexes
\[H_*(FLD, \cc)\to C_*(FLD, \cc)\] and
\[H_*(LD, \cc)\to C_*(LD, \cc),\]
which are canonical up to a contractible space of choices and compatible with the map of operads $LD\to FLD$, as well as the comultiplication on each homology chain complex. 
\end{thm}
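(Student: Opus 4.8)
The plan follows the four-step strategy the introduction lays out, so let me reconstruct how I would assemble these pieces into a proof of the main theorem.

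**Setting up the de Rham side.** The plan is to work with the cooperad of cochains and establish formality there first, dualizing at the end. The essential input is the operad $\fldlog$ of framed little curves in logarithmic algebraic geometry, together with the proper acyclicity property of each space $\fld_n^{log}$. I would first invoke the two consequences of proper acyclicity stated in the introduction: that $H^p\Omega^q(\fld_n)$ is concentrated in the single row $p=0$ and computes $H^q(\fld_n,\cc)$, and that the de Rham differentials $d\colon H^0\Omega^q\to H^0\Omega^{q+1}$ vanish. Together these say the Hodge-to-de Rham spectral sequence degenerates at $E_1$ with only one nonzero row. The first step is to package this into a genuine statement about the dg cooperad $C^*_{dR}(\fldlog)$: the single-row degeneration means that the cohomology of the de Rham complex of each space is computed by $H^0\Omega^\bullet$ with zero differential, so the canonical map from $H^*$ (viewed as a complex with zero differential) into $C^*_{dR}$ is a levelwise quasiisomorphism. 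The content here is that this collection of maps is compatible with the cooperadic structure maps, which follows because the $H^0\Omega^q$ functor and the associated maps are natural in the log schemes and their structure morphisms.

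**Comparing de Rham with Betti.** The second step is the comparison $C^*_{dR}(\fldlog)\simeq C^*(\fldlog^{top},\cc)$, where $\fldlog^{top}$ is the Kato-Nakayama topological realization. This is a Betti-to-de Rham comparison theorem for log schemes, applied levelwise and checked to be compatible with the operadic structure. I would cite the relevant comparison isomorphism (a logarithmic analogue of Grothendieck's theorem, or the results of \cite{ffc}) for each $\fld_n^{log}$, and then verify naturality with respect to the composition maps so that the comparison assembles into an equivalence of cooperads in the $\infty$-category of cochain complexes over $\cc$.

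**Transporting to little disks.** The third step is the topological identification $\fldlog^{top}\simeq \fld$ through a chain of homotopy equivalences of topological operads, again supplied by \cite{ffc}. Combining the three equivalences $H^*(\fld,\cc)\xrightarrow{\sim}C^*_{dR}(\fldlog)\xrightarrow{\sim}C^*(\fldlog^{top},\cc)\xrightarrow{\sim}C^*(\fld,\cc)$ yields a zig-zag of quasiisomorphisms of cooperads. Dualizing (and using that we work over a field, so linear duality is exact and exchanges the cooperad and operad structures) produces the desired formality isomorphism $H_*(FLD,\cc)\to C_*(FLD,\cc)$ in the $\infty$-category of operads of chain complexes. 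The unframed case is then handled by the argument of Section \ref{sec:ld}: I would realize $LD$ as the fiber of a map of proper acyclic log operads using the forgetful/section structures of Section \ref{sec:forgetful}, and run the same Hodge-theoretic degeneration argument on the fiber, which yields the second formality isomorphism compatibly with $LD\to FLD$.

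**Canonicity, compatibility, and the main obstacle.** Finally I would address the qualifiers in the statement. Canonicity up to a contractible space of choices follows because each map in the zig-zag is either genuinely canonical (the algebro-geometric maps, the Hodge splitting coming from the single-row degeneration) or is a comparison equivalence that is unique up to contractible choice in the relevant $\infty$-category; composing and inverting such maps in the localized $\infty$-category preserves this property. Compatibility with $LD\to FLD$ is built in by construction, and compatibility with the comultiplication follows because the comultiplication on homology is itself induced by the diagonal, which is respected by all the comparison maps. I expect the main obstacle to be the \emph{operadic} naturality at each stage: proper acyclicity and the comparison theorems are naturally levelwise statements, and the real work is checking that the splitting maps, the Betti-de Rham comparison, and the Kato-Nakayama equivalence are all compatible with the full operadic (equivalently cooperadic) structure coherently, i.e.\ as maps in the $\infty$-category rather than merely levelwise. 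This coherence is what makes the degeneration of a spectral sequence upgrade into a formality \emph{of operads}, and it is where the explicitness of the constructions in Sections \ref{sec:log_disks} and \ref{sec:forgetful} is essential.
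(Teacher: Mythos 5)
Your treatment of the framed case follows the paper's route closely: proper acyclicity of each $\fldlog_n$ forces the Hodge--de Rham spectral sequence to degenerate into a single row with vanishing de Rham differential, the resulting splitting is packaged symmetric-monoidally (the paper does this as a formality of the Betti cochains \emph{functor} on the category $\t{PALog}$, which then automatically formalizes any operad valued in it), and the Kato--Nakayama realization together with the comparison to the Kimura--Stasheff--Voronov operad transports this to $FLD$. That half of your proposal is essentially the paper's argument.

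The unframed case is where you have a genuine gap. You propose to ``realize $LD$ as the fiber of a map of proper acyclic log operads\ldots and run the same Hodge-theoretic degeneration argument on the fiber.'' But that fiber does not exist in the logarithmic category: the base of the relevant map is the equivariance operad $\t{Comm}^{\pt_{log}}$, and $\pt_{log}$ is a \emph{non-unital} monoid --- there is no map $\pt\to\pt_{log}$ over which to form the fiber, so there is no log operad whose realization is $LD$ and to which proper acyclicity could be applied. The paper's actual argument circumvents this entirely on the topological/cochain side: it replaces $\t{Comm}$ by $\t{Comm}^{\rr}$ via the universal cover $\tilde{\pt}_{log}^{top}\cong\rr(1)$ of the Kato--Nakayama circle, pulls back the complex of sheaves $\Omega^*_{log,top}$ to the corresponding cover $\tilde{\LD}^{log,top}$ of $\fldlog^{top}$, and then compares three rows of triples of cdga's (constant-sheaf cochains, $\Omega^*_{log,top}$-cochains, and cohomology) by taking homotopy pushouts of each row. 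The step that makes the pushouts compute the correct answer is a cofibrancy condition on one leg of each triple --- pullback along a covering space for the top rows, and cofibrancy of $H^*(\t{Comm}^{S^1})\to H^*(\FLD)$ coming from the fact that $\FLD_n\to(S^1)^n$ is a fibration (this is the ``cofibrant as a map of cdga's'' clause of Theorem \ref{thm:fldlog-main}, which the paper flags as precisely the ingredient needed to pass from $FLD$ to $LD$). Without this pushout-and-cofibrancy mechanism, or some substitute for it, the unframed formality does not follow from the framed one, and your sketch as written would not close.
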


The formality quasiisomorphism is a consequence of the following two results, which explain that (in a certain motivic sense), the canonical formality can be defined $\qq$ and can even be lifted (as a quasiisomorphism up to torsion) to have $\zz$ coefficients.

\begin{thm}[Proper acyclicity and formality]\label{thm:PALog-main}
There is a symmetric monoidal category $\t{PALog}$ of so-called proper acyclic log varieties over $\qq$ (a full subcategory of the category of the category $\t{LogSch}_\qq$ of log varieties over $\qq$) such that the Kato-Nakayama topological realization functor \[KN:\t{LogSch}_\qq\to \t{Top}\] restricted to $\t{PALog}$ has the following formality properties:
\begin{enumerate}
\item The Betti cochains functor with coefficients in $\cc,$ defined as the composition \[C^*_{\mathrm{Betti}}:\t{PALog}\xrightarrow{KN} \t{Top}\xrightarrow{C^*_\cc} D\t{Vect}\] from $\t{PALog}$ to the $\infty$-category of complexes of vector spaces is canonically formal in a symmetric monoidal sense. In other words, it is equivalent as a symmetric monoidal functor to the Betti homology functor \[H^*_{\mathrm{Betti}}:\t{PALog}\xrightarrow{KN} \t{Top}\xrightarrow{H^*_\cc} D\t{Vect}.\]
\item For $\Gamma\Omega^k:\t{LogSch}_\qq\to \T{Vect}_\qq$ the functor of global sections \emph{over $\qq$} of log $k$-forms, the $k$th homology functor $H^k_{\mathrm{Betti}}$ is canonically equivalent to the $\cc$-basechange $\Gamma\Omega^k\otimes_\qq\cc$, and this identification is symmetric monoidal in the variety when viewed as a natural transformation $\Gamma\Omega^*\otimes_\qq\cc\to H^*_{\t{Betti}}.$
\end{enumerate}
\end{thm}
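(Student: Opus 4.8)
The plan is to deduce the symmetric monoidal formality of the Betti cochain functor on $\t{PALog}$ from the two consequences of proper acyclicity recorded in the introduction: that for a proper acyclic $X$ the Hodge cohomology $H^p(X,\Omega^q)$ vanishes for $p\ge 1$, and that the de Rham differential $d\colon \Gamma(X,\Omega^q)\to\Gamma(X,\Omega^{q+1})$ is identically zero on global log forms. Together these say that the \emph{global} log de Rham complex $(\Gamma(X,\Omega^*),d)$ both computes de Rham cohomology (higher Hodge cohomology being absent, global sections compute hypercohomology) and carries the zero differential. Thus on $\t{PALog}$ the global forms $\Gamma(X,\Omega^*)$, equipped with the wedge product, form a graded-commutative algebra with zero differential that is already its own cohomology; formality will follow by transporting this along the Betti--de Rham comparison.

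First I would equip $\t{PALog}$ with its symmetric monoidal structure, given by the product of log varieties, and check it is closed under products: by the Künneth decomposition $\Omega^p_{X\times Y}\cong\bigoplus_{i+j=p}\Omega^i_X\boxtimes\Omega^j_Y$ together with Künneth for coherent cohomology, vanishing of higher Hodge cohomology and closedness of global forms are both inherited by $X\times Y$. Next I would fix a lax symmetric monoidal model for the hypercohomology de Rham functor $C^*_{dR}(X):=R\Gamma(X,\Omega^*_X)$ valued in complexes over $\qq$ --- for instance the Thom--Whitney functor of Navarro Aznar, which refines $R\Gamma$ of a sheaf of commutative dg algebras to a functor with a homotopy-coherent product and a genuine Künneth map. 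This carries two symmetric monoidal natural transformations: the inclusion of global forms $\Gamma(X,\Omega^*)\to C^*_{dR}(X)$, and the log Betti--de Rham comparison $C^*_{dR}(X)\otimes_\qq\cc\to C^*_{\mathrm{Betti}}(X)=C^*(KN(X),\cc)$ (a quasi-isomorphism on $\t{PALog}$), the latter being multiplicative because $KN$ preserves products and the period comparison carries wedge product to cup product.

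Now I would prove the degeneration. For proper acyclic $X$ the Hodge-to-de Rham spectral sequence $E_1^{a,b}=H^b(X,\Omega^a)\Rightarrow H^{a+b}_{dR}(X)$ is concentrated in the single row $b=0$, where $E_1^{a,0}=\Gamma(X,\Omega^a)$; its $d_1$ is the de Rham differential on global forms, which vanishes, and all higher $d_r$ vanish for degree reasons once the sequence lives in one row. Hence $E_\infty=E_1$, the inclusion $\Gamma(X,\Omega^*)\to C^*_{dR}(X)$ is a quasi-isomorphism, and $\Gamma(X,\Omega^*)$ carries the zero differential. Composing the two natural transformations above yields, on $\t{PALog}$, a natural chain of symmetric monoidal quasi-isomorphisms
\[
\Gamma(X,\Omega^*)\otimes_\qq\cc\ \xrightarrow{\ \sim\ }\ C^*_{dR}(X)\otimes_\qq\cc\ \xrightarrow{\ \sim\ }\ C^*_{\mathrm{Betti}}(X),
\]
whose source has zero differential and is therefore canonically its own homology $H^*_{\mathrm{Betti}}(X)$. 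This exhibits $C^*_{\mathrm{Betti}}$ as symmetric-monoidally formal, proving (1); reading the equivalence in each cohomological degree and using that the comparison sends wedge to cup gives the symmetric monoidal identification $\Gamma\Omega^*\otimes_\qq\cc\xrightarrow{\sim}H^*_{\mathrm{Betti}}$ of (2).

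The main obstacle is coherence rather than cohomology. The degeneration is immediate from proper acyclicity, but upgrading the pointwise quasi-isomorphisms to an equivalence \emph{of symmetric monoidal $\infty$-functors} requires (a) a genuinely homotopy-commutative, Künneth-compatible model of $R\Gamma(\Omega^*)$ --- supplied by the Thom--Whitney construction --- and (b) the symmetric monoidal coherence of the log Betti--de Rham comparison, i.e.\ compatibility of the period isomorphism with all higher products. The latter is the technically heaviest input, and I would obtain it at the level of sheaves on $KN(X)$, where the log de Rham complex is quasi-isomorphic to the constant sheaf $\cc$ compatibly with tensor products. A pleasant feature that removes the usual choices is that the abutment spectral sequence occupies a \emph{single} row: its Hodge filtration is automatically split, so the formality equivalence is canonical, with no associator or obstruction-theoretic splitting entering --- precisely the canonicity asserted in the main theorem.
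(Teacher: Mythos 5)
Your proposal is correct and follows essentially the same route as the paper: acyclicity makes global log forms compute de Rham hypercohomology, properness plus Kato's Hodge--de Rham degeneration kills the differential on $\Gamma(X,\Omega^*)$, and the Kato--Nakayama Poincar\'e lemma together with GAGA transports this formality to Betti cochains. The only (immaterial) difference is the device used for symmetric monoidal coherence --- you invoke a Thom--Whitney model, while the paper works in a Grothendieck-construction category of pairs (space, complex of sheaves) and uses functoriality of derived global sections; just note that the vanishing of $d_1$ on the single surviving row is the \emph{content} of the degeneration theorem for proper $\Omega$-smooth log schemes, not something deduced from it.
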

\begin{thm}\label{thm:fldlog-main}
\begin{enumerate}
\item There is a non-unital operad $\fldlog$ of objects in $\t{PALog} = \t{PALog}_\qq,$ which we call the operad of ``framed little (log) curves'' whose topological realization is related by a canonical chain of topoogical operad equivalences to $FLD.$
\item There is a map of operads in $\t{PALog}$ \[\fldlog\to \t{Comm}_{\pt_{log}}\] (where the operad $\t{Comm}_{\pt_{log}}$ is the operad responsible for ``commutative log point equivariant monoids'') which is homotopy equivalent to the operad $LD$ of little disks after taking homotopy fibers of the map of topological realizations. Furthermore, the corresponding map on graded homology rings is cofibrant when viewed as a map of cdga's.
\item Moreover, the operads $\fldlog, \t{Comm}_{\pt_{log}}$ and the map between them are defined as operads of log varieties over $\zz.$
\end{enumerate}
\end{thm}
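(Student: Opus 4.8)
The plan is to assemble the three assertions from the explicit models of Sections~\ref{sec:log_disks} and~\ref{sec:hodge_thy}. First I would define, for each arity $n \ge 1$, the log variety $\fldlog_n$ as the moduli space of genus-$0$ ``framed log curves'' with $n$ input points and one output: a possibly nodal rational curve carrying the normal-crossings log structure determined by its marked points and its nodal locus, where the monodromy circle $KN(\pt_{log}) = S^1$ attached to each marked point encodes the framing of the corresponding disk. The symmetric group permutes the inputs, and operadic composition is the log morphism that glues the output point of one curve to an input point of another, producing a log node. I would then invoke the Hodge-theoretic criterion of Section~\ref{sec:hodge_thy} to verify that each $\fldlog_n$ is proper acyclic, so that the whole (non-unital) operad lives in $\t{PALog}$; concretely this is the statement that the log de~Rham Hodge-to-de~Rham spectral sequence of $\fldlog_n$ degenerates at $E_1$ and is concentrated in a single row.

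For the topological comparison in part (1), I would apply the Kato-Nakayama functor $KN$ and invoke the results of \cite{ffc}, where a canonical zig-zag of homotopy equivalences of topological operads relates $KN(\fldlog)$ to $FLD$. The geometric point is that $KN(\fldlog_n)$ is homotopy equivalent to the space of $n$ framed disks inside a disk, with the log-node degeneration realizing operadic insertion; the work is to check that the zig-zag is compatible with composition and the symmetric-group actions, so that it is an equivalence of $\infty$-operads rather than merely a levelwise equivalence.

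For part (2), I would construct $\t{Comm}_{\pt_{log}}$ as the semidirect-product operad governing commutative monoids equipped with an $S^1 = KN(\pt_{log})$ action, whose arity-$n$ object realizes to $(S^1)^n$ (one framing circle per input, with composition adding angles), and define $\fldlog \to \t{Comm}_{\pt_{log}}$ as the map recording only the framing circles and forgetting the underlying configuration. Since $KN(\fldlog_n) \simeq \conf_n(\cc) \times (S^1)^n$ projects onto its framing factor $(S^1)^n \simeq KN(\t{Comm}_{\pt_{log},n})$, the homotopy fiber is the unframed configuration space $\conf_n(\cc) \simeq LD_n$, and I would check that this identification is operadic. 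On homology the map becomes the inclusion of the exterior framing algebra $\Lambda^*$ as a tensor factor into $H_*(FLD_n) \cong H_*(\conf_n(\cc)) \otimes \Lambda^*$; since $H_*(\fldlog)$ is obtained from $H_*(\t{Comm}_{\pt_{log}})$ by adjoining the configuration-space (Gerstenhaber) generators, the map is a cofibration of cdga's, as one verifies from the explicit presentation.

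Part (3) then follows by observing that the moduli description, its log structures, and the gluing and forgetful morphisms are all cut out by equations with integer coefficients, so the entire diagram descends to an operad of log varieties over $\zz$. The main obstacle I anticipate is not any single construction but the \emph{coherence} of the above: one must ensure that proper acyclicity, and hence the symmetric monoidal formality of Theorem~\ref{thm:PALog-main}, interacts correctly with the structure maps so that the levelwise equivalences of part (1) assemble into a genuine equivalence of $\infty$-operads, and in particular that the homotopy-fiber identification with $LD$ in part (2) upgrades from spaces to operads compatibly with the cofibrant cdga model, since it is exactly this compatibility that will feed the formality of $LD$ in the Main Theorem.
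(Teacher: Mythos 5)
Your overall strategy matches the paper's. The paper defines $\fldlog_n$ as the normal-crossings log structure $(\DMK_n, D_n,\underline{L})$ on $\bar{\M}_{0,n+1}$ given by the boundary divisor together with the cotangent lines $L_0,\dots,L_n$ at the marked points, with composition coming from the identification of the normal bundle of a boundary stratum with $L_i\boxtimes L_0$; the topological comparison in part (1) goes through the Kato--Nakayama realization and its identification with the Kimura--Stasheff--Voronov operad via \cite{ffc} and \cite{ksv}; proper acyclicity is delegated to the Hodge-theoretic Theorem \ref{operad_pa_property}; and part (3) is immediate from the integral definition of $\bar{\M}_{0,n+1}$, its boundary and its tautological bundles. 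All of this you have, at least in outline.

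The genuine gap is in part (2), in the construction of the map $\fldlog\to\t{Comm}^{\pt_{log}}$ itself. ``Recording the framing circle at the $i$-th input'' is not a map of log schemes that you can simply write down: by the classification of maps of normal-crossings log structures, a map $\fldlog_n\to\pt_{log}$ over $\bar{\M}_{0,n+1}\to\pt$ amounts to a homogeneous map from the total space of $L_0\oplus\cdots\oplus L_n$ to $\aa^1$, and the naive candidate (fiberwise projection onto the $i$-th framing) would require a nonvanishing such map, i.e.\ a trivialization of the relevant tangent lines at the marked points --- impossible, since their $\psi$-classes are nonzero on $\bar{\M}_{0,n+1}$. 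The paper spends Section \ref{sec:forgetful} on exactly this: it exhibits $\fldlog_n$ as a pullback of $\M_{0,2(n+1)}^{log}$ along a doubling embedding $j$, exhibits $\fldlog_1=\pt_{log}$ as the pullback of $\M_{0,4}^{log}$ at a nodal point, and defines $\theta_i^{log}$ from the forgetful map $\bar{\M}_{0,2(n+1)}\to\bar{\M}_{0,4}$; the ``angle'' being recorded is the invariant combination of the tangent directions at $x_0$ and $x_i$, which only becomes visible after this maneuver. Relatedly, note that $\pt_{log}$ is non-unital (there is no map $\pt\to\pt_{log}$), so the fiber square (\ref{eq:fld-square}) has no log-geometric analogue; the theorem's statement only takes homotopy fibers after topological realization, where the basepoint of $S^1$ exists, and the cochain-level fiber comparison that feeds the formality of $LD$ requires the further device of the universal cover $\tilde{\pt}_{log}^{top}\cong\rr(1)$ and the pulled-back complex of log forms. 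Your closing ``coherence'' worry is pointing at the right part of the argument, but the specific obstruction is this non-unitality and the non-existence of the naive framing projection, not general $\infty$-operadic bookkeeping. Your cofibrancy argument (freeness of $H^*(FLD_n)$ over the exterior algebra of the framing circles) is the same point the paper uses.
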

Together the two theorems imply formality for $FLD$ and (using the cofibrancy result), for $LD.$ The operads and quasiisomorphisms we construct are a priori not unital, but ``unitizable'', i.e., equivalent to unital operads; by standard homotopy-theoretic arguments, the non-unital formality quasiisomorphisms imply also formality of the corresponding unital operads: see for example Roig, \cite{roig}.

\subsection{Acknowledgments}
The author is grateful for discussions with Clemens Koppensteiner and Mattia Talpo, which are responsible for much of the log geometric content of this paper. Many of the weight filtration ideas in Section \ref{sec:operad_pa} were explained to the author by Joana Cirici. The idea for giving the explicit definition of the log composition morphisms in section \ref{sec:fld_def} was suggested an anonymous reviewer of the paper \cite{ffc}, to whom the author is indebted. A number of discussions were instrumental for the gestation of this paper, and the author would like to in particular thank Pavel Etingof, Akhil Mathew, Agust\'i Roig and Dmitry Tamarkin.

\section{Logarithmic geometry and the operad of framed little disks}\label{sec:log_disks}
\subsection{The category of logarithmic schemes}\label{sec:log_intro} See \cite{ogus_logbook} for an in-depth development of Kato's logarithmic geometry, and \cite{talpo_log} for an informal introduction; we give a minimalistic picture of the relevant theory here. We work over a characteristic-zero field $k,$ which we will mostly take to be either $\qq$ or $\cc.$ Given a scheme $X$, a log structure on $X$ is an \'etale sheaf of monoids $\M/X$ with a certain multiplicative relationship to the sheaf of functions $\oo_X.$ Schemes with log structure form a category $\t{LogSch}$ with behavior analogous to the category of schemes. In particular, if $X$ is an algebraic variety then one can turn it into a log scheme by taking ``trivial log structure'' on $X$ (which we will abusively still denote $X\in \t{LogSch}$); this realizes the category $\t{Sch}$ of schemes as a full subcategory of the category of log schemes. In the other direction, we can define a functor taking a log scheme $\X$ corresponding to a log structure on the scheme $X$ to the underlying space, $$\X\mapsto \mathring{\X}: = X.$$ The forgetful functor $\X\mapsto \mathring{\X}:\t{LogSch}\to \t{Sch}$ is left adjoint to the canonical embedding of $\t{Sch}$ in $\t{LogSch},$ with unit of the adjunction given by natural transformation $$\pi_\X: \X\to \mathring{\X}$$ for $\X\in \t{LogSch},$ which we call projection to the underlying scheme.
\begin{rmk}
All log schemes we work with will be fine and saturated. Moreover, they will be of \emph{normal-crossings type} (though we treat a larger class of log schemes in the Appendix, \ref{appendix}). 
\end{rmk}
\subsection*{Omega Smoothness.}
Normal-crossings log schemes have a special smoothness property which we call $\Omega$-smoothness. $\Omega$-smooth log schems are the class of schemes for which Kato and Nakayama proved their Betti-de~Rham correspondence. In the language of Ogus \cite{ogus_logbook}, they can be characterized as log schemes which are smooth when endowed with tautological idealized structure. See Section \ref{sec:dsmooth} of the Appendix for details.

\subsection*{Normal-crossings log schemes}
We now give a suite of definitions and results which will provide sufficient background on log schemes of normal-crossings type to explicitly define and work with our operad $\fldlog$. Proofs of all of these statements can be found in \cite{ogus_logbook}. 

\begin{enumerate}
\item Given a smooth scheme $X$ together with a normal-crossings divisor $D\subset X$, there is a log scheme $(X, D)_{log}$ with underlying space $X$. Such a log scheme is in particular \emph{smooth}.
\item To a log scheme $\X$ with underlying scheme $X$ (resp., a map of log schemes $\X\to \Y$), we can associate a bundle $$\Omega_\X$$ over $X$ of log differentials (resp., $\Omega_{\X/\Y}$ of relative log differentials). We define $$\Omega^k_\X : = \Lambda^k_{/\oo_X}\Omega,$$ in particular $\Omega^0_\X = \oo_X.$ For $\X = X$ (trivial log structure), $\Omega_\X = \Omega_X.$ For a log scheme of type $\X = (X, D)_{log},$ we have $\Omega_\X = \Omega_X(D, \t{log}),$ the bundle of rational differentials generated locally by $\dlog f$ for $f$ functions with no zeroes or poles outside $D.$
\item We define the log tangent bundle $T_\X := \Omega^\vee_\X$ to be dual to the cotangent bundle.
\item There is a differential $d:\Omega^k_\X\to \Omega^{k+1}_\X$ which is a map of sheaves of vector spaces over $X$ (similarly to the non-logarithmic context). The de Rham cohomology $$H^*_{dR}(\X)$$ of a log scheme $\X$ is defined as the hypercohomology of the complex of sheaves $(\Omega^*_\X,d)$ on $X$.
\item To a scheme $X$ and a line bundle $L$ on $X$ one associates a log scheme $(X, L)_{log}$ with underlying scheme $X$, with monoid $\M_{(X, L)}$ given by all homogeneous functions on the total space of $L$. For $X = \pt, L = \oo_{pt}$ the trivial line bundle, we define the ``log point'' $$\pt_{log} : = (\pt, \oo)_{log}.$$ Schemes of the form $(X, L)_{log}$ (for $X$ smooth) are $\Omega$-smooth (See Appendix, \ref{sec:dsmooth}). Moreover the projection to the underlying scheme $(X, L)_{log}\to X$ is an $\Omega$-smooth map.
\item $\Omega(X, L)_{log}$ is the sheaf whose sections over $U\subset X$ are $\gg_m$-equivariant differentials of the $\gg_m$-torsor $\gg L$ on $U$ given by removing the zero section of $L.$
\item There is a useful intuition for the log scheme $(X, L)_{log},$ which has a geometric correlate in terms of Kato-Nakayama spaces (introduced in the next section). Namely, $(X, L)_{log}$ can be thought of as a ``zeroth order logarithmic neighborhood'' of the zero section in the total space of $L$. Indeed, functions on $(X, L)_{log}$ are simply functions on $X$, while one-forms on $(X, L)_{log}$ are locally generated by restrictions to the zero-section $X\subset L$ of regular one-forms on $L$ (i.e., one-forms on $X$) and ``residue terms'' of singular one-forms on $L$ with first order singularity along the zero section.
\item Log schemes have a notion of base change which is compatible with base change of underlying schemes, and base change with respect to an $\Omega$-smooth map preserves $\Omega$-smoothness and induces a pullback diagram of sheaves of (pulled back) log tangent bundles in a standard sense.
\item Let $X$ be a smooth scheme with a collection of line bundles $L_1, L_2, \dots, L_d.$ Write $\underline{L} : = (L_1,L_2,\dots, L_d)$ for the tuple of bundles (this is given by the same data as a $\gg_m^d$-principal bundle on $X$). We define $(X, \underline{L})_{log}$ for the iterated fiber product of the $(X, L_i)_{log}$ over $X$. This is an $\Omega$-smooth log scheme, and its projection to the underlying space $X$ is $\Omega$-smooth with fibers $\pt_{log}^d$.
\item More generally given a smooth scheme $X$ with a tuple $\underline{L} = (L_1,\dots, L_d)$ of line bundles as above and a normal-crossings divisor $D\subset X$, write $$(X, D, \underline{L}) : = (X, \underline{L})_{log}\times_X (X,D)_{log}.$$ This is the base change of the smooth log scheme $(X, D)_{log}$ under an $\Omega$-smooth map, hence is $\Omega$-smooth. 
\item Schemes \'etale locally of the type $(X, D, \underline{L})_{log}$ as above are called log schemes of normal-crossings type. Schemes \emph{Zariski} locally of the type $(X, D, \underline{L})_{log}$ with $D\subset X$ a strict normal-crossings divisor are called of strict normal crossings type. All log schemes we deal with will be of strict normal-crossings type.
\item To an $\Omega$-smooth log scheme $\X$, one associates two numbers: the log dimension, defined as the rank of the log tangent bundle $\Omega_\X$ and the geometric dimension, defined as the dimension of the underlying scheme $X$. There is also a log fiber dimension, defined as the difference of the two. The scheme $(X, D, \underline{L})_{log}$ has geometric dimension $n$ and log dimension $n+d,$ where $n = \dim(X)$ and $d$ is the number of line bundles in the tuple $\underline{L}.$
\item Given two log schemes $\X, \X'$ both with underlying scheme $X,$ we say that a \emph{map of log structures} is a map $\X\to \X'$ over the identity map of underlying schemes.
\item Given a map of schemes $f:X\to Y$ and a log structure $\Y$ on $Y$, there is a ``pullback log structure'' $f^*(\Y)$ with underlying scheme $X$ and canonical map $f^*(\Y)\to \Y$. Any map $\X\to \Y$ with map of underlying schemes $f$ factors uniquely through this map as $\X\to f^*\Y\to \Y,$ with the map $\X\to f^*\Y$ a map of log structures.
\item Assume $\X = (X, D, \underline{L})$ and $\X' = (X, D', \underline{L}')$ are two normal-crossings log structure with the same underlying scheme $X$. Then a map of log structures $$\X\to \X'$$ is classified by the following data.
$$\t{Maps}_X(\X, \X') = \begin{cases} \emptyset, & D'\nsubseteq D\\
\{\alpha:\aa \underline{L}\to \aa \underline{L}'\mid \alpha\t{ homogeneous}\}, & D'\subseteq D.
\end{cases}
$$
Here $\aa\underline{L}$ is the total space of the vector bundle $L_1\oplus \dots \oplus L_d$ and a map between two such bundles is homogeneous if elements of pure degree $(m_1, m_2,\dots, m_d)$ map to elements of pure degree $(m'_1,m'_2\dots, m'_{d'})$, equivalently if it is torus-equivariant with respect to a map of tori $\gg_m^d\to \gg_m^{d'}$. For example the set of maps of log structures between $\pt_{log}^d$ and $\pt_{log}^{d'}$ is in bijection with $d\times d'$ matrices with positive integer coefficients (here matrices with integer coefficients classify torus maps $\gg_m^d\to \gg_m^{d'}$ and the positivity condition ensures that they extend to the partial compactification $\aa^d\to \aa^{d'}$). 
\item Suppose $X$ is a scheme with $D$ a strict normal-crossings divisor and $\underline{L} = (L_1,\dots, L_d)$ a tuple of line bundles. Suppose that $Y : = \bar{D}^k_\alpha$ is a closed normal-crossings stratification component of dimension $k$ and codimension $c = n-k$. Let $$\iota:Y\to X$$ be the closed embedding of this stratum. Then then there are $c$ distinct codimension-one closed strata $\bar{D}_1^{n-1},\dots, \bar{D}_c^{n-1}$ which contain $Y,$ and we have a canonical identity relating normal bundles $$N_XY \cong \bigoplus_{i=1}^c N_i,$$ for $$N_i : = \iota_Y^*N_X\bar{D}_i^{n-1}.$$ Let $D_Y$ be the union of all normal-crossings strata contained in $Y$ of higher codimension. Then we have the following canonical isomorphism: $$\iota_Y^*\big((X, D, \underline{L})_{log}\big)\cong \big(Y, D_Y, (L_1,L_2,\dots, L_d, N_1,N_2,\dots, N_d)\big).$$
Note that this immediately permits us to classify maps between strict normal-crossings schemes, so long as they lie over the embedding of a normal-crossing stratum. This will be the case for the maps defining our operad structure on $\fldlog.$ 
\end{enumerate}
\subsection{The log operad of framed little curves}\label{sec:fld_def}
We can now define the log analogue of the operad of framed little disks in terms of the minimalistic sketch of log geometry of the previous section. Note that our definition of $\fldlog$ here is equivalent to the less explicit moduli-theoretic definition of the reduced genus zero ``framed formal curves'' operad $$\mathfrak{ffc}_{g=0}$$ in \cite{ffc}. Recall that an operad $O$ in a category $\C$ with symmetric monoidal structure $\times$ is a collection of ``space of operations'' objects $O_n\in \C$ together with composition maps $\comp_i:O_n\times O_m\to O_{m+n-1}$ for $1\le i\le n,$ and symmetric group actions $\Sigma_n\curvearrowright O_n,$ satisfying certain compatibilities. In the presence of a covariant functor of points $S:\C\to \t{Sets},$ the sets $S(O_n)$ represent $n\to 1$ operations in an algebra structure, and the composition map $\comp_i:(o_m, o_n)\mapsto o_n\circ_i o_m$ represents the operation obtained by plugging in the output of $o_m$ as the $i$th input of $o_n$ (keeping all inputs of $o_m$ and the $n-1$ remaining inputs of $o_n$ free), and the $\Sigma_n$ action produces new operations from old ones by permuting the inputs. 

\begin{rmk}[A note about identity operations]
In this text we will prove formality for non-unital operads, as the formality property for a unital operad follows from formality for the corresponding non-unital operad: see for example \cite{roig}. In particular our model $\fldlog$ will be non-unital (though it can be made unital, see \cite{ffc}). Nevertheless to get a more concrete unital quasiisomorphism, it is possible to show that after taking chains, all our quasiisomorphisms are compatible with appropriate unital structure.
\end{rmk}

In particular, in order to define a log operad $\fldlog$ in the category of log schemes, we need to define log spaces $\fldlog_n$ of $n$-ary operations, the composition maps $\comp_i: \fldlog_m\times \fldlog_n\to \fldlog_{m+n-1}$ and permutation actions $\Sigma_n\curvearrowright \fldlog_n.$ 

We begin by reminding the reader about a closely related operad in the category of (ordinary) algebraic varieties, which we call the Deligne-Mumford-Knudson (DMK) operad. Namely, define for $n\ge 2,$ $$\DMK_n : = \bar{\M}_{0,n+1},$$ the moduli space of stable genus zero nodal curves with $n+1$ marked points labeled $x_0, x_1,\dots, x_n$. We think of the point $x_0$ as representing the output, and all $x_{\ge 1}$ as inputs. There is an obvious action of the symmetric group $\Sigma_n$ permuting inputs and we define composition morphisms $$\comp_i:\bar{\M}_{0,m+1}\times \bar{\M}_{0,n+1}$$ as maps of moduli representing the geometric gluing construction \begin{align*}(X, x_0, \dots, x_{m+1}) \comp_i (X', x_0',\dots, x_{n+1}) : &\cong \\ \left(\frac{X\sqcup X'}{(x_0'\sim x_i)}, x_0, x_1\dots, x_{i-1}, x_1', \dots, x_{m}', x_{i+1}, \dots, x_{n}\right)&
\end{align*}
(here as stable genus zero curves have no automorphisms, the isomorphism assignment uniquely defines the map). 

Checking these conditions on $\DMK_{\ge 2}$ satisfy the operad axioms is equivalent to checking that (up to isomorphism), the composite glueing $\frac{X\sqcup Y\sqcup Z}{x\sim y, y'\sim z}$ along disjoint pairs of points is independent of the order of spaces glued, a tautology. We formally extend the operad structure on $\DMK_{\ge 2}$ above to a unital operad by defining $$\DMK_1 : = \pt.$$ It is helpful to think of the point in $\DMK_1$ as classifying a ``fully collapsed'' genus $0$ curve with a single ``node'' point and no $1$-dimensional components, and with marked input and output (which happen to coincide in this case). 

Now we define $\fldlog$ to be a log operad on top of the operad $\DMK.$ Namely, let $D_n\subset \DMK_n$ be the normal-crossings divisor classifying all strictly nodal curves in $\bar{\M}_{0, n+1}.$ Let $$D_{m,m'}: = \comp_1(\DMK_m\times \DMK_m')\subset \DMK_n$$ be the image of the first composition map from $\DMK_m\times \DMK_{m'}$ for $m+m' = n+1.$ Let $L_k = T_k^*$ (for $k = 0,\dots, n$) be the line bundle over $\DMK$ classifying the cotangent line to the marked point $x_k$.
We have the following crucial lemma.
\begin{lm}
\begin{enumerate} 
\item\label{jio1} $\comp_1:\DMK_m\times \DMK_{m'}\to \DMK_n$ is an embedding of a closed normal-crossings component.
\item\label{jio2} $D = \cup \sigma(D_{m,m'})$ for $\sigma\in \Sigma_n$ ranging over $(m', m-1)$ shuffles. 
\item\label{jio3} $\comp_i^* N_{\DMK_n}(D)$ is canonically isomorphic to the tensor product line bundle $L_i^*\boxtimes L_0^*$ on $\DMK_m\times \DMK_{m'}.$
\end{enumerate}
\end{lm}
\begin{proof}
Parts \ref{jio1} and \ref{jio2} are equivalent to the well-known fact that the normal-crossings boundary $\bar{\M}_{0,n}\setminus \M_{0,n}$ consists of a union of moduli spaces classifying genus-zero curves that have a node that splits the set of marked points via a fixed bipartite partition. See for example Section 2.5.1 of \cite{cavalieri} for \ref{jio3}. 
\end{proof}

Now we are ready to define the operad $\fldlog.$ 
\begin{defi}
Define $$\fldlog_n: = (\DMK_n, D_n, \underline{L})$$ for $\underline{L} = (L_0, L_1,\dots, L_n)$ the tuple of cotangent bundles at all the marked points. Define $\Sigma_n$-action by permuting the marked input points $x_1,\dots, x_n$ and the corresponding line bundles $L_1, \dots, L_n.$ Define gluing maps $\comp_i^{\fldlog}: \fldlog_m\times \fldlog_n\to \fldlog_{m+n-1}$ as the composition 
\[
\begin{tikzcd} \fldlog_m\times \fldlog_n\arrow[r, "\alpha_{m,n}"]& (\comp_i^{\DMK})^*\DMK_{m+n-1}\arrow[r, "\iota_{m,n}"]& \DMK_{m+n-1},
\end{tikzcd}
\]
where $\iota_{m,n}$ above is the universal map from the pullback log structure, and $\alpha$ is a map between log structures to be defined below. By the standard characterization of the normal bundle to the boundary of Deligne-Mumford space we have
\begin{align*}(\comp_i^{\DMK})^*\DMK_{m+n-1}&\cong \\ \left(\DMK_m\times \DMK_n, D, \big(\comp_i^*(L_1), \dots, \comp_i^*(L_{m+n-1}), N_{\DMK_{m+n-1}}\comp_i\big)_{log}\right).&\end{align*}
\end{defi}
Now observe that each $(\comp_i^{\DMK})^*(L_k)$ is a bundle on $\bar{\M}_{0,n+1}\times \bar{\M}_{0,m+1}$ which for each pair of curves $(X, X')$ classifies the tangent line at a marked point of the glued curve $\frac{X\sqcup X'}{x_i'\sim x_0},$ equivalently the tangent line at either some $x_j$ or $x'_j,$ depending on $k$. The explicit matching doesn't matter very much, but explicitly we have \[\comp_i^*(L_k)\cong \begin{cases} 
L_k\boxtimes \oo_{\bar{\M}0,m+1}, & k<i\\
\oo_{\bar{\M}n+1, 0}\boxtimes L_{k-i+1}, & i\le k\le i+m-1\\
L_{k-m+1}, & i\ge i+m.
\end{cases}
\]
Observe also that the pulled back normal line bundle to the ``$i$-composition divisor'' $N_{\DMK_{m+n-1}}\comp_i$ on $\bar{\M}_{0,n+1}\times \bar{\M}_{0,m+1}$ is $T_i\boxtimes T_0\cong L_i\boxtimes L_0,$ the tensor product of the tangent bundles at the two glued points. Therefore if $j\neq i$ indexes a marked points of the curve $X$ and $k\neq 0$ a marked point of $X',$ we can define the (homogeneous) map $\alpha$ from each $L_i\boxtimes \oo$ to $\comp_i^*(\underline{L}_{0, m+n-1})$ by sending the corresponding summand isomorphically to the corresponding tangent line of the glued curve. It remains to define a map on the total space of $L_i\boxtimes \oo \oplus \oo\boxtimes L_0$ to the pulled back normal bundle $N_{\DMK_{m+n-1}}\comp_i\cong L_i\boxtimes L_0.$ We define this to be the quadratic multiplication map from the two-dimensional affine bundle $\aa L_i\boxtimes \oo \oplus \oo\boxtimes L_0$ to the tensor product of the two coordinates $L_i\boxtimes L_0.$ 

Note that we assumed that $n\ge 2$ in the above. We define $$\fldlog_1 : = \pt_{log}.$$ (The log point, equivalently $(\pt, k)_{log}.$) We define the $1\to 1$ operad compositions structure (equivalently, monoid structure) on $\fldlog_1$ as that induced from multiplicative monoid structure on $k$ (this is the standard monoidal structure on $\pt_{log}$). To define action maps $\comp_i:\fldlog_n\times \pt_\log\to \fldlog_n$ and $\comp_0:\pt_\log\times \fldlog_n\to \fldlog_n$ we need to specity an equivariant action of $(\aa^1,\cdot)$ on the affine space $L_0\times\dots \times L_n;$ we do this by having $\aa^1$ act linearly on the corresponding factor $L_i$ (with $i \in \{1,2,\dots, n\}$ for right action and $i = 0$ for left action of $\fldlog_n$). 

To check that this defines an operad structure, one must verify several standard relations, chief among them the associativity relation on a pair of operations of type $\comp_i.$ This follows from the fact that the normal bundle of a codimension-two component of $\bar{\M_{0,n}}$ is the direct sum of the two one-dimensional components that intersect in it; alternatively, it can be deduced by observing that our explicit construction coincides with the moduli-theoretic operad composition operations in \cite{ffc}. 

\subsection{Forgetful maps and $\fldlog_1$}\label{sec:forgetful} If $X$ is a stable curve with marked points indexed by a finite set $\Gamma$ and $\Gamma'\subset \Gamma$ is a subset of order $\ge 3$, the curve $X_{\Gamma'}$ is defined as the curve obtained from $X$ by forgetting all points not in $\Gamma'$ and contracting all unstable components. This induces ``forgetful'' maps $\forg_{\Gamma, \Gamma'}:\bar{\M}_{0,n}\to \bar{\M}_{0,n'}$ for $n\ge n'\ge 3.$ This map is compatible with normal-crossings structure, and it is straightforward to lift this map to a canonical map $\fldlog_{n-1}\to \fldlog_{n'-1}$ for $n\ge n'\ge 3.$ Though we will not use this until section \ref{sec:ld}, it will be convenient for us to extend this to $n'\ge 2,$ i.e., define maps $$\forg_{[n], \{i\}} :\fldlog_n\to \fldlog_1 (=\pt_{log})$$ (which later we will call $\theta^{log}_i$, in analogy with certain geometric maps) corresponding to ``forgetting all inputs except the $i$th input''. A convenient way to do so is as follows: let $j:\bar{\M}_{0,n+1}\to \bar{\M}_{0, 2(n+1)}$ be the map that takes a curve and glues a curve with three labels to each point. Then (by comparing normal bundles) we see that $\fldlog_n$ for $n\ge 2$ canonically fits into the following pullback diagram, with $\M_{0,n+1}^{log}$ the space $(\bar{\M}_{0,n+1},D_{0,n+1})_{log}$:
\[
\begin{tikzcd}
\fldlog_n \arrow[r, "j^{log}"]\arrow[d, "\pi"] & \M_{0,2(n+1)}^{log}\arrow[d, "\pi"]\\
\bar{\M}_{0,n+1}\arrow[r, "j"] & \bar{\M}_{0,2(n+1)}.
\end{tikzcd}
\]
Here in the moduli problem of $\bar{\M}_{0,2(n+1)}$ one should label the marked points $x_0,\dots, x_n, y_0,\dots, y_n$ and the image of $\bar{\M}_{0,n+1}$ under $j$ is the set of curves where each pair $x_i, y_i$ are on their own nodal component with only one node. 

Now the forgetful maps $\forg: \fldlog_n\to \fldlog_{n'}$ are the induced maps on pullback log schemes for the following map of triples of forgetful maps
\[
\begin{tikzcd}
&\M_{0,2(n+1)}^{log}\arrow[d, "\pi"]\arrow[dr]&\\
\bar{\M}_{0, n+1}\arrow[r, "j"]\arrow[dr]&\bar{\M}_{0,2(n+1)}\arrow[dr] & \M_{0,2(n'+1)}^{log}\arrow[d, "\pi"]\\
&\bar{\M}_{0,n'+1}\arrow[r, "j"] & \bar{\M}_{0,2(n'+1)}.
\end{tikzcd}
\]
We note that the space $\fldlog_1\cong \pt_{log}$ fits into the diagram 
\[
\begin{tikzcd}
\fldlog_1 \arrow[r, "j"]\arrow[d, "\pi"] & \M_{0,4}^{log}\arrow[d, "\pi"]\\
\pt\arrow[r, "j"] & \bar{\M}_{0,4},
\end{tikzcd}
\]
Here we label the points of $\M_{0,4}$ by $x_i, y_i$ for $0\le i\le 1,$ and the point $j(\pt)$ is the unique nodal curve with $x_0, y_0$ on one nodal component and $x_1, y_1$ on the other. We can now define forgetful maps \[\theta^{log}_i:\fldlog_n\to \fldlog_1\] as the induced maps on pullbacks for the diagram 
\[
\begin{tikzcd}
&\M_{0,2(n+1)}^{log}\arrow[d, "\pi"]\arrow[dr]&\\
\bar{\M}_{0, n+1}\arrow[r, "j"]\arrow[dr]&\bar{\M}_{0,2(n+1)}\arrow[dr] & \M_{0,4}^{log}\arrow[d, "\pi"]\\
&\pt\arrow[r, "j"] & \bar{\M}_{0,4},
\end{tikzcd}
\]
where the forgetful map $\bar{\M}_{0,2(n+1)}\to \bar{\M}_{0,4}$ forgets all indices $x_k, y_k$ for $k$ not in $\{0,i\}.$ Note that this gives additional meaning to the notion that one should consider $\bar{\M}_{0,2}$ to be a single point, classifying the ``single node'' curve (with dual graph the ``open edge'' graph). 
\begin{rmk} In fact, the structure of $\fldlog_n$ together with all forgetful maps is equivalent to an extension of the operad $\fldlog$ to an operad $\fldlog^+$ with $0\to 1$ operations classified by $\pt;$ ``forgetting'' an index is then the operation of ``gluing'' the point of $\fldlog^+_0$ at that index. We will not use this fact here.
\end{rmk}

\section{De Rham cohomology and Hodge theory}\label{sec:hodge_thy}
We review here the topological and analytic aspects of logarithmic geometry over $\cc,$ focusing on de Rham cohomology and Hodge-de Rham comparison. We take in this section $k = \cc$ and study the category $\t{LogSch}_k$ of $\cc$-log schemes.
\subsection{Kato-Nakayama spaces, the de Rham comparison and acyclicity}\label{sec:KN}
There is a category $\t{AnLog}$ of complex analytic spaces with log structure, and an \emph{analytic realization} functor $$\X\mapsto \X^{an}$$ from log schemes over $\cc$ to log analytic schemes, extending the functor $X\mapsto X^{an}$ from schemes with trivial log structure to analytic schemes with trivial log structure. Log schemes also have a wonderfully behaved \emph{topological} realization (discovered by Kato and Nakayama), but unlike the case of ordinary schemes, this is not equivalent to the analytic realization, i.e., the ``underlying topological space'' of a log scheme is not provided by the $\cc$-points with analytic topology: rather, this has to be modified to take into account the logarithmic structure. 

The ``corrected'' topological realization functor $$\X\mapsto \X^{top}:\t{LogSch}\to \t{Top}$$ is defined\footnote{What we denote $\X^{top}$ is written $\X^{log}$ in the original paper \cite{kato-nakayama}; we use the superscript ``top'' for topological to avoid confusion with log analogues of classical objects like $\M_{g,n}^{log}$} as 
$$\X^{top} := \X(\pt_{KN})$$ with $\pt_{KN}$ the ``Kato-Nakayama point'', a certain nontrivial log structure on $\spec(\cc)$. (Note that this structure is very different from the ``log point'' structure, $\pt_{log},$ which is defined over any base field.) The Kato-Nakayama point carries a certain analytic structure which gives $\X^{top}$ the structure of, first, a topological space and second, a locally ringed space. When $\X = X$ has trivial log structure, we have naturally $$\X^{top}\cong \X^{an}$$ as locally ringed topological spaces, but for general log schemes $\X^{an}$ might not be analytic, and indeed might have odd (real) dimension. For a scheme of type $(X, D)_{log},$ its analytification is the real blow-up $$\t{Bl}_D(X(\cc))$$ of the divisor $D(\cc)\subset X(\cc).$ This is a real $2n$-dimensional manifold with corners (for $n = \t{dim}(X)$) whose interior is canonically an $n$-dimensional complex analytic manifold, namely $(X\setminus D)(\cc).$ Note that in this case the map of topological spaces $U^{top}\to \X^{top}$ is a homotopy equivalence, for $U = X\setminus D$ the locus of trivial log structure on $\X$. The topological realization of the log point is the unit circle, $$(\pt_{log})^{top}\cong S^1.$$ 

Before proceeding, we give the following lemma, proved in \cite{ffc}, establishing a comparison between the Kato-Nakayama realization of $\fldlog$ and framed little disks.
\begin{lm}
\begin{enumerate}
\item\label{trfd:1} The Kato-Nakayama topological realization $\fldlog^{top}$ is canonically isomorphic as an operad to the operad $\underline{\mathcal{N}}$ introduced in the paper \cite{ksv} by Kimura, Stasheff and Voronov. 
\item\label{trfd:2} The topological operad $\fldlog^{top}$ is related to $FLD$ by a pair of explicit homotopy equivalences of operads.
\end{enumerate}
\end{lm}
\begin{proof}
Part (\ref{trfd:1}) is proven in \cite{ffc}. In brief: the Kimura-Stasheff-Voronov operad has space of operations $\underline{\mathcal{N}}_n$ defined as an $(S^1)^{n+1}$-bundle over the real blow-up of the boundary in $\bar{\M}_{0,n+1}$. On the other hand, the space $\fldlog$ is a $\pt_{log}^{n+1}$-bundle over the normal-crossings log space $(\bar{\M}_{0, n+1}, D)^{log}$, and so its geometric realization is also an $(S^1)^n$-bundle over the real blow-up of the boundary of $\bar{\M}_{0,n+1}$. Essentially by definition, the two bundles are the same. It is then an exercise done in \cite{ffc} to check that the operad composition maps match up. 

Part (\ref{trfd:2}) is proven in \cite{ksv}; see also 2.2 in \cite{gs_formality}. 
\end{proof}

Now there is a theory of GAGA (G\'eom\'etrie Alg\'ebrique-G\'eom\'etrie Analytique, after Serre \cite{gaga}) comparisons of invariants for a log scheme $\X$ which is richer than for non-logarithmic schemes, as it involves comparisons of invariants for not two but three objects $\X$, $\X^{an}$ and $\X^{top}$ with their accompanying local ringed and logarithmic structures. The relevant comparison in our case is the pair of isomorphisms \[H^*_{dR}(\X)\cong H^*_{dR}(\X^{an})\] and \[H^*_{dR}(\X^{an}) \cong H^*_{\sing}(\X^{top}, \cc)\] which hold for any $\Omega$-smooth log scheme $\X$ over $\cc.$ We will in fact use a refinement of these homology isomorphisms to chain level, which is easier to state for schemes with a nice acyclicity property. 
\begin{defi} We say that a log scheme $\X$ is \emph{acyclic} if the sheaves $\Omega^k(\X)$ are acyclic for all $k$, i.e., $H^{\ge 1}(\Omega^k(\X), X) = 0.$
\end{defi}
For example any log scheme with affine underlying scheme is acyclic (since coherent sheaves have no higher cohomology), meaning that any log scheme has an acyclic Zariski cover. The complex of de Rham chains is easiest to write down for $\X$ acyclic, where we simply write \[C^*_{dR, acyc}(\X) : = \left(H^0\Omega^*(\X), d\right).\] This definition can be extended to a DG functor on non-acyclic sheaves by gluing (it is canonical in an $\infty$-categorical sense). We will only need the acyclic statement here.

\begin{thm}\label{bettitodr} 
Let $\mathrm{Acyc}$ be a small symmetric monoidal model for the the category of acyclic log schemes (recall that our log schemes are coherent fs schemes of finite type, so isomorphism classes form a set). Then there is (canonically in an $\infty$-categorical sense) a sequence of symmetric monoidal quasiisomorphisms between (lax, contravariant) symmetric monoidal functors $\mathrm{Acyc}\to \mathrm{dg-Vect}$ between the functor $\X\mapsto C^*_{\sing}(\X^{top})$ of singular cochains and $C^*_{dR, acyc}$ of log de Rham cochains. 
\end{thm}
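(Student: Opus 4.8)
The plan is to factor both functors through the derived global sections of the constant sheaf $\cc$ on the Kato--Nakayama realization $\X^{top}$, and to realize the resulting identifications by explicit lax symmetric monoidal zig-zags. I will treat the singular side first. For a real-analytic manifold with corners (which is what $\X^{top}$ is when $\X$ is $\Omega$-smooth, our setting throughout), the functor $C^*_{\sing}(-)$ is canonically equivalent, as a lax symmetric monoidal functor, to derived global sections of the constant sheaf: this is the standard comparison of singular cochains with constant-sheaf cohomology, the lax structure being the cochain cross product (Eilenberg--Zilber), which encodes the coherently-commutative ($E_\infty$) structure on $C^*_{\sing}$. Since the Kato--Nakayama functor preserves products, $(\X\times\Y)^{top}\cong\X^{top}\times\Y^{top}$, this is natural and monoidal in $\X$.

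The heart of the argument is to identify $C^*_{dR,acyc}(\X)=(H^0\Omega^*(\X),d)=\Gamma\Omega^*(\X)$ with the same object, at the chain level and monoidally. Here I would introduce on $\X^{top}$ the fine complex $\mathcal{A}^*_{\X^{top}}$ of smooth (complex-valued) differential forms in the Kato--Nakayama sense, and use the two resolutions feeding into it: the smooth Poincar\'e lemma, giving a quasi-isomorphism $\cc\to\mathcal{A}^*_{\X^{top}}$; and, for $\tau:\X^{top}\to X^{an}$ the projection to the underlying analytic space, the inclusion $\tau^{-1}\Omega^*_{\X^{an}}\to\mathcal{A}^*_{\X^{top}}$ (sending $\dlog f$ to $d\log|f|+i\,d\arg f$), which is a quasi-isomorphism by the logarithmic Poincar\'e lemma of Kato--Nakayama --- the engine behind the isomorphisms $H^*_{dR}(\X)\cong H^*_{dR}(\X^{an})\cong H^*_{\sing}(\X^{top},\cc)$ recalled above. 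Taking (fine, hence acyclic) global sections yields
\[
\Gamma\Omega^*(\X)\;\longrightarrow\;\Gamma(\X^{top},\mathcal{A}^*)\;\xrightarrow{\ \int\ }\;C^*_{\t{sm}}(\X^{top})\;\xleftarrow{\ \sim\ }\;C^*_{\sing}(\X^{top}),
\]
where the first map analytifies algebraic log forms and includes them into smooth forms, the middle map is the de Rham integration map into smooth singular cochains, and the last is restriction to smooth simplices. Acyclicity of $\X$ is exactly what guarantees that $\Gamma\Omega^*(\X)$ already computes the log de Rham hypercohomology $H^*_{dR}(\X)$ with no further resolution; the smooth de Rham theorem and the two Poincar\'e lemmas then make every map a quasi-isomorphism. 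The lax symmetric monoidal structures --- external wedge of log forms, external product of smooth forms, and the cochain cross product --- are compatible under all three maps, and the log K\"unneth identity $\Omega^*_{\X\times\Y}\cong\Omega^*_\X\boxtimes\Omega^*_\Y$ makes the first map monoidal.

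Finally I would localize the small model $\mathrm{Acyc}$ at quasi-isomorphisms: in the resulting $\infty$-category of lax symmetric monoidal functors to chain complexes every arrow in the two zig-zags becomes an equivalence, and since no arrow involves a choice beyond a contractible space of fine resolutions and smoothing data, the composite equivalence $C^*_{\sing}(\X^{top})\simeq C^*_{dR,acyc}(\X)$ is canonical.

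I expect the main obstacle to be the coherent symmetric monoidality, not the cohomological comparison (which is already supplied by the cited Kato--Nakayama isomorphisms). The de Rham integration map is multiplicative only up to coherent homotopy, so realizing the entire zig-zag as a map of lax symmetric monoidal functors in the $\infty$-categorical sense --- rather than merely on each homology group --- requires a model in which the comparison is manifestly $E_\infty$, for instance replacing $\mathcal{A}^*$ and $C^*_{\t{sm}}$ by the Thom--Whitney/Sullivan globalization of piecewise-smooth forms, or working throughout with $E_\infty$-algebras of sheaves. The remaining technical point is to establish the chain-level GAGA comparison for the log de Rham complex functorially, and to check that $\tau$ and the resolution $\mathcal{A}^*$ can be chosen naturally in $\X\in\mathrm{Acyc}$.
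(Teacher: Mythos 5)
Your overall strategy --- compare both functors to the constant sheaf on the Kato--Nakayama space and realize the comparison by a zig-zag of lax symmetric monoidal maps --- is the same in spirit as the paper's, but the middle term of your zig-zag is the wrong object, and this is a genuine gap rather than a presentational issue. The complex $\mathcal{A}^*$ of honest smooth forms on the manifold with corners $\X^{top}$ does not receive the logarithmic forms: in polar coordinates on the real blow-up one has $\dlog z = dr/r + i\,d\theta$, and $dr/r$ is not smooth up to the boundary face $r=0$, so your map ``$\dlog f\mapsto d\log|f|+i\,d\arg f$'' does not land in $\mathcal{A}^*$. If you enlarge the target to admit such forms, its Poincar\'e lemma fails unless you also adjoin primitives of $dr/r$, i.e.\ formal logarithms --- which is exactly Kato--Nakayama's sheaf $\oo^{\log}$ and their complex $\omega^*_{\log}$ (the paper's $\Omega^*_{top}$); and that complex is not fine, so one cannot compute with naive global sections and must use $R\Gamma$. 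Relatedly, your assertion that $\tau^{-1}\Omega^*_{\X^{an}}\to\mathcal{A}^*_{\X^{top}}$ is a quasi-isomorphism of sheaves is false even with the corrected target: at a point of the exceptional circle the stalk of $\tau^{-1}\Omega^*_{an}$ has $H^1\ni\dlog z\neq 0$, while the target resolves $\underline{\cc}$. The correct statement, and the one the paper invokes, is that the \emph{adjoint} map $\Omega^*_{an}\to R\tau_*\omega^*_{\log}$ is a quasi-isomorphism; this suffices after applying derived global sections but is not a stalkwise statement on $\X^{top}$.

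The second gap is the one you flag yourself: coherent multiplicativity of the integration map $\mathcal{A}^*\to C^*_{\mathrm{sm}}$. The paper avoids integration entirely. It works in the Grothendieck-construction category of pairs (topological space, complex of sheaves), where derived global sections is a lax symmetric monoidal functor, and uses the zig-zag $(\X^{top},\underline{\cc})\to(\X^{top},\Omega^*_{top})\leftarrow(X^{an},\Omega^*_{an})\leftarrow(X^{alg},\Omega^*_{alg})$, with the first map a quasi-isomorphism by the Kato--Nakayama Poincar\'e lemma, the second handled by the adjoint pushforward statement above, and the third by GAGA for the log de Rham complex; the comparison of $R\Gamma(\X^{top},\underline{\cc})$ with $C^*_{\sing}(\X^{top})$ is then the standard one for locally contractible spaces. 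If you want to keep your route through smooth forms and integration you must actually carry out the Thom--Whitney/$E_\infty$ repair you sketch; it is simpler, and closer to what acyclicity is for, to stay inside sheaf cohomology throughout.
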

\begin{proof}
This follows from a simple $\infty$-categorical extension of Kato and Nakayama's Betti-to-de Rham comparison result, see the appendix \ref{appendix}. 
\end{proof}

\begin{rmk}
Note that this theorem implies a Hodge-to-de Rham equivalence for the $\infty$-category of all (fs) log schemes, by viewing fs log schemes as functors of points on affine fs log schemes, which are automatically acyclic. 
\end{rmk}

\subsection{Hodge to de Rham and proper acyclicity}\label{sec:pa}
Like in the classical case of smooth schemes, the hypercohomology interpretation of $H^*_{dR}$ implies a Hodge to de Rham spectral sequence with $E1$ term $$H^p (X,\Omega^q_{\X/X})\implies H^*_{dR}(\X).$$ Also analogously to the classical case, this spectral sequence degenerates when $\X$ is smooth and proper \cite{kato} (see also \cite{hablicsek}).\footnote{In fact this degeneration also holds when $\X$ is $\Omega$-smooth with underlying scheme $X$ proper. As the author could not find a reference for this more general fact in the literature, we will deduce a statement of this type by converting an $\Omega$-smooth scheme into a log smooth scheme with the same Hodge structure (something that is always possible using toric geometry).} This spectral sequence is associated to a chain-level \emph{Hodge} filtration on $C^*(\X)$. In the previous section we defined a log scheme to be acyclic if $H^{\ge 1}\Omega^k_\X = 0$ for all $k$. The main new input into our formality splitting is the acyclicity property. Note that an $n$-dimensional smooth and proper variety with trivial log structure has a dualizing class in $H^n\Omega^n_X,$ and hence cannot be acyclic unless $n=0.$ However a nontrivial log variety can be both acyclic and proper (we say that a log scheme is proper if its underlying scheme is proper). A key example is the smooth and proper log scheme $$\X = (\pp^1, D)_{log}$$ for $D$ a divisor with $d \ge 1$ points. Here acyclicity is elementary: indeed, the sheaf of functions $\Omega^0$ does not depend on log structure giving $\Omega^0_{\X/X} = \oo_{\pp^1},$ which is acyclic, and $\Omega^1_{\X/X} = \Omega^1_{\pp^1}(D) \cong \oo(d-2)$ is also acyclic. Note that log smoothness is not required for proper acyclicity: for example, the log point $\pt_{log}$ is trivially proper acyclic. 

Let $$\t{PALog}$$ be the category of proper acyclic $\Omega$-smooth log schemes (a full subcategory of $\t{LogSch}$). Hodge to de Rham degeneration implies the following theorem.

\begin{thm}
Over a field $k$ of characteristic $0$, there is a canonical formality quasiisomorphism natural transformation $i:H^*_{dR}\to C^*_{dR}$ between the two DG functors $\t{PALog}\to \t{dgVect}$ (for $\t{dgVect}$ the category of complexes). Moreover, $i$ is compatible with (lax) symmetric monoidal structure.
\end{thm}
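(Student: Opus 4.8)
The plan is to extract the formality quasiisomorphism directly from the Hodge-to-de~Rham spectral sequence together with the acyclicity hypothesis, working functorially on $\t{PALog}$. Recall that for any $\Omega$-smooth log scheme $\X$, the de~Rham cochains $C^*_{dR}(\X)$ are computed as the hypercohomology of the complex of sheaves $(\Omega^*_\X, d)$, and this carries the Hodge filtration whose associated graded is the $E_0$-page with $E_1^{p,q} = H^p(X, \Omega^q_{\X/X})$. For $\X \in \t{PALog}$ the acyclicity property forces $E_1^{p,q} = 0$ for $p \ge 1$, so the $E_1$-page is concentrated in the single row $p = 0$. A spectral sequence with a single nonzero row degenerates at $E_1$ for dimension reasons (all higher differentials land in or emanate from zero groups), so we get a canonical identification $H^q_{dR}(\X) \cong H^q\big(\Gamma(X, \Omega^*_\X), d\big)$, i.e.\ the de~Rham cohomology is computed by the honest cochain complex of global log forms with its de~Rham differential. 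This is precisely the complex $C^*_{dR, acyc}(\X) = (H^0\Omega^*(\X), d)$ from the preceding definition.

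With degeneration in hand, I would construct the formality map as a zig-zag. First I would form the stupid (Hodge) filtration $F^{\bullet}$ on $C^*_{dR}(\X)$, realized via the acyclic model as the filtration of $(\Gamma\Omega^*, d)$ by form-degree $F^p = (\Gamma\Omega^{\ge p}, d)$. Because the spectral sequence degenerates at $E_1$ and is concentrated in one row, the associated graded $\mathrm{gr}_F$ of this filtered complex is quasiisomorphic to its cohomology with zero differential; concretely, each graded piece $\mathrm{gr}^p_F = \Gamma\Omega^p[-p]$ is already a complex whose cohomology in the relevant degree is $H^p_{dR}(\X) = H^*_{dR}(\X)$ placed in the correct slot, because the other rows vanish. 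This gives the canonical zig-zag
\[
H^*_{dR}(\X) \;\xleftarrow{\sim}\; \mathrm{gr}_F\, C^*_{dR}(\X) \;\xrightarrow{\sim}\; C^*_{dR}(\X),
\]
where the left arrow is the degeneration identification and the right arrow is the comparison of a filtered complex with its associated graded, which is a quasiisomorphism exactly because of $E_1$-degeneration. Inverting the left arrow (which is a levelwise isomorphism of complexes here, since the differentials all vanish) produces the desired natural transformation $i : H^*_{dR} \to C^*_{dR}$ in the homotopy category of DG functors.

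The two remaining points are \emph{naturality} and \emph{monoidality}, and these are where the real work lies. For naturality I would observe that the Hodge filtration is functorial in $\X$ for maps in $\t{LogSch}$ (pullback of log forms strictly respects form-degree), so the entire zig-zag is a zig-zag of natural transformations of DG functors $\t{PALog} \to \t{dgVect}$; the only subtlety is that $E_1$-degeneration is needed at \emph{each} object to invert the left leg, which holds precisely because $\t{PALog}$ consists of proper acyclic objects. To make this genuinely canonical rather than merely object-wise, I would phrase the construction $\infty$-categorically: the functor sending $\X$ to its filtered de~Rham complex lands in filtered complexes, and degeneration is the statement that this filtered object is \emph{formal} (split) as a filtered object, giving a contractible space of splittings. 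For monoidality, the key input is that log differential forms satisfy a Künneth/wedge compatibility $\Omega^*_{\X \times \Y} \cong \Omega^*_\X \boxtimes \Omega^*_\Y$ and that the Hodge filtration on a product is the tensor-product filtration; hence the lax symmetric monoidal structure on $C^*_{dR}$ (given by wedge product of forms) is filtered, and passing to associated graded yields the lax monoidal structure on $H^*_{dR}$. I expect the \textbf{main obstacle} to be promoting the object-wise degeneration to a coherent symmetric-monoidal natural equivalence of $\infty$-functors: one must check that the splitting of the Hodge filtration can be chosen compatibly with the monoidal (wedge) structure and with all higher coherences, which amounts to showing the filtered de~Rham functor is formal as a \emph{lax symmetric monoidal} filtered functor. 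This should follow formally from $E_1$-degeneration plus the multiplicativity of the Hodge filtration, but making the contractibility of choices precise is the technical heart of the argument.
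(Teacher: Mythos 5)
There is a genuine gap at the central step. You claim that because acyclicity concentrates the $E_1$-page of the Hodge-to-de Rham spectral sequence in the single row $p=0$, the spectral sequence ``degenerates at $E_1$ for dimension reasons.'' This is false for this spectral sequence: the filtration is the stupid (Hodge) filtration by form degree, so $d_1$ is induced by the de Rham differential and preserves the sheaf-cohomology degree $p$ --- it moves \emph{along} the row, not between rows. A single nonzero row therefore kills only the differentials $d_r$ with $r\ge 2$: you get degeneration at $E_2$ and the (correct, but already immediate from acyclicity of the sheaves) identification of $H^*_{dR}(\X)$ with the cohomology of $\bigl(H^0\Omega^*_\X, d\bigr)$. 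What it does \emph{not} give you is the vanishing of $d_1$, i.e.\ of the de Rham differential $d\colon H^0\Omega^q\to H^0\Omega^{q+1}$ --- and that vanishing is the entire content of the theorem. Your subsequent assertion that each graded piece $\mathrm{gr}^p_F$ already has cohomology $H^p_{dR}(\X)$ ``in the correct slot'' silently assumes exactly this vanishing. A sure sign that something is wrong is that your main argument never uses properness: a smooth affine variety with trivial log structure is acyclic, yet the differential on its global de Rham complex is certainly nonzero (e.g.\ $\aa^1$).

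The paper's proof supplies the missing input explicitly: by Kato's theorem the Hodge-to-de Rham spectral sequence of a proper ($\Omega$-)smooth log scheme degenerates at $E_1$ --- a genuine theorem (reduction mod $p$ or Hodge theory), not a formal consequence of the shape of the page --- hence $d_1=0$, and restricted to the surviving row this says the de Rham differential on $H^0\Omega^*$ vanishes. Combined with acyclicity, $C^*_{dR}(\X)=(H^0\Omega^*_\X,d)$ is then literally a complex with zero differential, equal on the nose to $H^*_{dR}(\X)$; so $C^*_{dR}$ factors through complexes with zero differential, the natural transformation $i$ is essentially the identity, and naturality and (strict) symmetric monoidality are automatic. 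Your zig-zag through the associated graded, and the higher-coherence worries you flag at the end, are then unnecessary. To repair your write-up, cite proper Hodge-to-de Rham degeneration as the input and delete the ``dimension reasons'' step.
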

\begin{proof} By our definition, $C^*_{dR}$ is defined for acyclic sheaves as the complex $(H^0\Omega^k_\X, d_{dR})$. Hodge-to-de Rham degeneration for proper schemes implies that $d_{dR} = 0$ for each $\X\in \t{PALog}.$ Thus the functor $C^*_{dR}$ as a (lax) symmetric monoidal functor from $\t{PALog}$ to the category of complexes factors through the subcategory of complexes with zero differential (and is in fact strict symmetric monoidal).
\end{proof}
Functoriality and symmetric monoidicity of this natural transformation implies that any ``algebraic co-structure'' on de Rham cochains associated to an algebraic structure on proper acyclic spaces is itself canonically proper. In particular we have the following corollary.

\begin{cor}[Theorem \ref{thm:PALog-main} of Section \ref{sec:results}]\label{thm:padr-formality}
The Betti cochain functor $C^*_{\t{Betti}}:\X\mapsto C^*_{sing}(\X^{top},\cc)$ is \emph{formal} as a symmetric monoidal functor of $\infty$-categories when restricted to $\t{PALog}\subset \t{LogSch}$, i.e., it is canonically quasiisomorphic to the Betti cohomology functor $\X\mapsto H^*(\X^{top}, \cc).$
\end{cor}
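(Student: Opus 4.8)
The plan is to realize $C^*_{\t{Betti}}$ as the starting point of a chain of symmetric monoidal quasiisomorphisms of $\infty$-functors $\t{PALog}\to\t{dgVect}$,
\[
C^*_{\t{Betti}} \;\simeq\; C^*_{dR,acyc} \;=\; C^*_{dR} \;\simeq\; H^*_{dR} \;\cong\; H^*_{\t{Betti}},
\]
every link of which is already supplied (up to assembly) by a result stated above. Since \emph{formality} of $C^*_{\t{Betti}}$ means exactly a symmetric monoidal equivalence between it and its cohomology $H^*_{\t{Betti}}$, composing the links finishes the proof; the real content lies in verifying that each link is defined on all of $\t{PALog}$ and that the composite respects the tensor structure coherently.

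First I would note that $\t{PALog}$ is by definition a full subcategory of the acyclic log schemes $\mathrm{Acyc}$, since proper acyclicity subsumes acyclicity. Hence Theorem \ref{bettitodr} applies verbatim and provides the first link: a canonical chain of symmetric monoidal quasiisomorphisms between the singular cochain functor $\X\mapsto C^*_{\sing}(\X^{top})$ (that is, $C^*_{\t{Betti}}$) and the log de Rham cochain functor $C^*_{dR,acyc}$. On acyclic schemes $C^*_{dR,acyc}$ agrees with $C^*_{dR}$ by definition, so we are reduced to comparing $C^*_{dR}$ with $H^*_{\t{Betti}}$. For the second link I would invoke the immediately preceding theorem: Hodge-to-de Rham degeneration for proper $\Omega$-smooth schemes forces the de Rham differential $d_{dR}\colon H^0\Omega^k_\X\to H^0\Omega^{k+1}_\X$ to vanish on every $\X\in\t{PALog}$, so the formality natural transformation $i\colon H^*_{dR}\to C^*_{dR}$ there is a quasiisomorphism of (strict) symmetric monoidal functors, identifying $C^*_{dR}$ with $H^*_{dR}$. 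Finally, the classical de Rham comparison recalled in Section \ref{sec:KN}, $H^*_{dR}(\X)\cong H^*_{dR}(\X^{an})\cong H^*_{\sing}(\X^{top},\cc)$, furnishes the last link $H^*_{dR}\cong H^*_{\t{Betti}}$; as it is compatible with K\"unneth it is symmetric monoidal.

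It remains to confirm that $\t{PALog}$ is genuinely a symmetric monoidal subcategory, so that the chain makes sense tensorially. Properness and $\Omega$-smoothness are manifestly stable under products, and acyclicity is preserved because $\Omega^k_{\X\times\Y}$ decomposes, via the log K\"unneth formula, as a direct sum of external products $\Omega^i_\X\boxtimes\Omega^j_\Y$, each of whose higher cohomology vanishes as soon as that of the two factors does. The hard part — and precisely the reason the two auxiliary theorems are formulated with symmetric monoidal compatibility built into their hypotheses — is ensuring that the three links fit together as a single \emph{coherent} chain of symmetric monoidal natural transformations of $\infty$-functors, rather than merely objectwise or after forgetting the tensor structure. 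Once Theorem \ref{bettitodr} and the preceding de Rham formality theorem are granted with their stated monoidal compatibility, this coherence is automatic and the corollary follows by composing the three equivalences.
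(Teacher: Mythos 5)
Your proposal is correct and follows exactly the paper's own (one-line) proof: combine the Hodge-to-de Rham degeneration theorem giving the quasiisomorphism $H^*_{dR}\to C^*_{dR}$ on $\t{PALog}$ with the symmetric monoidal Betti-to-de Rham comparison of Theorem \ref{bettitodr}, noting that proper acyclic schemes are in particular acyclic. Your additional verifications (that $\t{PALog}$ is closed under products and that the links compose coherently as symmetric monoidal natural transformations) are exactly the implicit content the paper leaves to the reader.
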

\begin{proof}
We combine Theorem \ref{thm:padr-formality} with Theorem \ref{bettitodr}.
\end{proof}
\begin{cor}\label{cor:ops}
If $X\in \t{PALog}$ is a pure acyclic log space then the cohomology algebra $C^*(X,\cc)$ is canonically formal, and if $\mathbb{O}$ is an operad valued in $\t{PALog}$ then $C^*(\mathbb{O}, \cc)$ is canonically a formal co-operad.
\end{cor}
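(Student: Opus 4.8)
The plan is to deduce this corollary as a purely formal consequence of the symmetric monoidal formality of the Betti cochain functor established in Corollary~\ref{thm:padr-formality}, using the general principle that a lax symmetric monoidal functor of $\infty$-categories transports algebraic structures defined diagrammatically in terms of the monoidal product, and that an equivalence of such functors transports them compatibly. Concretely, write $F := C^*_{\t{Betti}}$ and recall from Corollary~\ref{thm:padr-formality} that, restricted to $\t{PALog}$, $F$ is canonically equivalent, \emph{as a (lax, contravariant) symmetric monoidal functor of $\infty$-categories}, to the Betti cohomology functor $G := H^*_{\t{Betti}}$, whose target lies in complexes with zero differential. Everything below is an application of this single equivalence.

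For the algebra statement, I first observe that any $X \in \t{PALog}$ is canonically a cocommutative comonoid for the monoidal structure $\times$: the diagonal $\Delta\colon X \to X\times X$ and the projection to the unit furnish the comultiplication and counit, and $X\times X$ again lies in $\t{PALog}$ because the class of proper acyclic $\Omega$-smooth log schemes is closed under $\times$ (this is exactly the content of $\t{PALog}$ being symmetric monoidal, together with the K\"unneth behaviour of $\Omega$-forms). Applying the contravariant lax symmetric monoidal functor $F$ sends this comonoid to a commutative (i.e.\ $E_\infty$) algebra structure on $C^*(X,\cc)$, namely the cup product, while applying $G$ produces the graded cohomology ring $H^*(X,\cc)$ with zero differential. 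The canonical symmetric monoidal equivalence $F \simeq G$ then yields an equivalence of $E_\infty$-algebras $C^*(X,\cc)\simeq H^*(X,\cc)$, which is precisely the asserted canonical formality of the cohomology algebra.

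For the operad statement I would proceed identically, replacing ``comonoid'' by ``operad object.'' An operad $\mathbb{O}$ valued in $(\t{PALog},\times)$ is encoded by the objects $\mathbb{O}_n$, the $\Sigma_n$-actions, and the composition morphisms $\comp_i\colon \mathbb{O}_m\times\mathbb{O}_n\to \mathbb{O}_{m+n-1}$, all of which are built from $\times$; applying the contravariant lax symmetric monoidal functor $F$ turns these into cocomposition maps $C^*(\mathbb{O}_{m+n-1})\to C^*(\mathbb{O}_m)\otimes C^*(\mathbb{O}_n)$ and hence a co-operad $C^*(\mathbb{O},\cc)$, and likewise $G$ produces the co-operad $H^*(\mathbb{O},\cc)$ with zero differential. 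The equivalence $F\simeq G$, being a symmetric monoidal natural transformation, induces an equivalence of the resulting co-operads, establishing formality of $C^*(\mathbb{O},\cc)$.

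The one genuinely non-trivial point --- and the step I expect to require the most care --- is the $\infty$-categorical bookkeeping that licenses the two previous paragraphs: namely that ``apply a symmetric monoidal functor of $\infty$-categories'' is itself a functor on the $\infty$-categories of (co)operad objects (equivalently, of algebras over the relevant $\infty$-operad of operators), and that a symmetric monoidal natural equivalence of functors induces a natural equivalence on these categories, canonically up to a contractible space of choices. This is exactly where one invokes the strength of the hypothesis in Corollary~\ref{thm:padr-formality}: the formality there is asserted at the level of symmetric monoidal functors of $\infty$-categories (not merely objectwise), so the coherence needed to descend it to co-operads is already built in. I would make this precise using Lurie's formalism of $\infty$-operads, in which an operad object in a symmetric monoidal $\infty$-category $\mathcal{C}$ is a suitable algebra and symmetric monoidal functors act by post-composition; no further input beyond Corollary~\ref{thm:padr-formality} and the closure of $\t{PALog}$ under $\times$ is needed.
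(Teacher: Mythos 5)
Your proposal is correct and follows exactly the paper's own argument: the paper likewise deduces the corollary by applying the symmetric monoidal formality of the Betti cochain functor (Corollary~\ref{thm:padr-formality}) to $X$ viewed as a cocommutative comonoid via the diagonal $\Delta\colon X\to X\times X$, and to the operad $\mathbb{O}$ directly. The additional $\infty$-categorical bookkeeping you flag is implicit in the paper's appeal to the symmetric monoidal nature of the equivalence, so no further input is needed.
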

\begin{proof}
This follows from the previous corollary, applied either to the comonad $X$ (viewed as a commutative coalgebra in $\t{PALog}$ in the standard way, with coproduct $\Delta:X\to X\times X$) or to the operad $O$. Note that the two can be combined to obtain formality of $C^*(\mathbb{O},\cc)$ in the $\infty$-category of cooperads of commutative algebras.
\end{proof} 
Armed with Corollary \ref{cor:ops}, it remains only to show that the operad $\fldlog$ is an operad of spaces in $\t{PALog}$ in order to get an explicit formality quasiisomorphism for the corresponding topological operad $FLD$. 
\subsection{Proper acyclicity of $\fldlog$}\label{sec:operad_pa}
In order to prove formality of framed little disks it remains to prove the following theorem. 
\begin{thm}\label{operad_pa_property}
The spaces $\fldlog_n$ are proper acyclic.
\end{thm}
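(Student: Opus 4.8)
The plan is to verify the three defining properties of $\t{PALog}$ for $\fldlog_n = (\bar{\M}_{0,n+1}, D_n, \underline{L})_{log}$ separately, with essentially all of the work concentrated in acyclicity. Properness is immediate, since the underlying scheme $\bar{\M}_{0,n+1} = \DMK_n$ is projective. $\Omega$-smoothness is exactly item (10) of the normal-crossings dictionary above, applied to the smooth scheme $\bar{\M}_{0,n+1}$, its normal-crossings boundary divisor $D_n$, and the tuple $\underline{L} = (L_0,\dots,L_n)$ of cotangent line bundles. It therefore remains to show that the coherent sheaves $\Omega^k_{\fldlog_n}$ of log $k$-forms have no higher cohomology on $\bar{\M}_{0,n+1}$.

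First I would peel off the line-bundle (``vertical'') directions. Using $\fldlog_n = (\bar{\M}_{0,n+1},\underline{L})_{log}\times_{\bar{\M}_{0,n+1}} (\bar{\M}_{0,n+1}, D_n)_{log}$ as in item (10), the projection onto the normal-crossings factor is $\Omega$-smooth with $\pt_{log}^{n+1}$ fibres, so its relative cotangent sequence is a short exact sequence of locally free sheaves on $\bar{\M}_{0,n+1}$,
\[
0 \to \Omega_{\bar{\M}_{0,n+1}}(\log D_n) \to \Omega_{\fldlog_n} \to \oo_{\bar{\M}_{0,n+1}}^{\oplus (n+1)} \to 0,
\]
where the trivial quotient records the $n+1$ relative $\dlog$-directions of the line bundles (each factor $(\bar{\M}_{0,n+1}, L_i)_{log}$ contributes a canonically trivialized $\dlog$ of its fibre coordinate, just as for $\pt_{log}$). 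Passing to exterior powers equips $\Omega^k_{\fldlog_n}$ with a finite filtration whose graded pieces are direct sums of copies of $\Omega^a_{\bar{\M}_{0,n+1}}(\log D_n)$ for $a\le k$. Since the vanishing of higher cohomology passes through extensions via the long exact sequence, this reduces the theorem to the single statement
\[
H^{\ge 1}\big(\bar{\M}_{0,n+1},\, \Omega^a_{\bar{\M}_{0,n+1}}(\log D_n)\big) = 0 \qquad \text{for all } a.
\]
I emphasize that this reduction does not require the extension to split, so the Chern classes $c_1(L_i)$ never enter; note also that it lets me work with the \emph{classical} smooth proper normal-crossings pair $(\bar{\M}_{0,n+1}, D_n)$, sidestepping the $\Omega$-smooth generalization of Hodge-to-de~Rham degeneration.

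For this core vanishing I would invoke Deligne's theory of the logarithmic de~Rham complex of $(\bar{\M}_{0,n+1}, D_n)$: the Hodge-to-de~Rham spectral sequence $E_1^{a,q}=H^q(\bar{\M}_{0,n+1}, \Omega^a(\log D_n)) \Rightarrow H^{a+q}(\M_{0,n+1},\cc)$ degenerates at $E_1$ and identifies $E_1^{a,q}$ with the Hodge graded piece $\Gr_F^a H^{a+q}(\M_{0,n+1},\cc)$ of the cohomology of the open moduli space. The decisive input is that $H^k(\M_{0,n+1},\cc)$ is pure of Hodge--Tate type $(k,k)$ for every $k$: the open genus-zero moduli space is a hyperplane-arrangement complement whose cohomology ring is generated in degree one by the weight-two classes $\dlog(x_i-x_j)$ (alternatively, one argues by induction along the forgetful fibration $\M_{0,n+1}\to\M_{0,n}$, whose fibre is a punctured line). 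Type $(k,k)$ forces $\Gr_F^a H^{a+q}=0$ whenever $q\ge 1$, which is exactly the vanishing required.

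The main obstacle is precisely this last step: everything hinges on the purity (Hodge--Tate type $(k,k)$) of $H^*(\M_{0,n+1})$, which is what fails for an ordinary smooth proper variety and is the structural reason the log enhancement is needed in the first place. This purity is classical, but it is the essential external input; once it is in hand, the remaining steps are the formal sheaf-theoretic bookkeeping sketched above, and assembling the three properties shows $\fldlog_n\in\t{PALog}$.
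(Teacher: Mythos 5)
Your proof is correct, and it reaches the same two essential inputs as the paper --- $\Omega$-smoothness/properness being formal, and the Tate-type purity of $H^*(\M_{0,n+1})$ (proved the same way, via the forgetful fibration or the hyperplane-arrangement description) --- but the reduction to that input is genuinely different. To handle the line bundles, the paper compactifies $\underline{L}$ into a $(\pp^1)^{n+1}$-bundle $\pp(\bar{\M}_{0,n+1},\underline{L})_{log}$ with the zero and infinity sections added to the boundary divisor, embeds $\fldlog_n$ as the zero section, checks fiberwise (via $\pt_{log}\hookrightarrow(\pp^1,0\sqcup\infty)_{log}$) that this embedding induces isomorphisms on all $H^p\Omega^q$, and then applies a weight-filtration criterion (its notion of $2$-purity, Lemma \ref{2pureacyc}, proved with the weight spectral sequence of the compactification) to the total space, which is a $\gg_m^{n+1}$-bundle over $\M_{0,n+1}$ and hence $2$-pure by the descent Lemma \ref{purity_descent}. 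You instead filter $\Omega^k_{\fldlog_n}$ directly via the exterior powers of the Atiyah-type extension $0\to\Omega_{\bar{\M}_{0,n+1}}(\log D_n)\to\Omega_{\fldlog_n}\to\oo^{\oplus(n+1)}\to 0$, reducing everything to $H^{\ge 1}(\Omega^a(\log D_n))=0$ for the classical normal-crossings pair, which you get from Deligne's $E_1$-degeneration and the identification $H^q(\Omega^a(\log D_n))\cong\Gr_F^aH^{a+q}(\M_{0,n+1})$ together with type-$(k,k)$ purity; your observation that no splitting of the extension is needed (so the classes $c_1(L_i)$ are irrelevant) is the right one. Your route is more elementary and stays entirely within classical mixed Hodge theory of the open pair, never invoking log-specific Hodge theory beyond the local structure of $\Omega_{\fldlog_n}$; the paper's route is heavier but sets up machinery (purity descent, the weight-spectral-sequence criterion, and the motivic characterization mentioned in its remark) that applies to general $\Omega$-smooth log varieties rather than only to those of the form $(X,D,\underline{L})_{log}$.
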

\begin{proof}
As the underlying scheme of $\fldlog_n$ is $\bar{\M}_{0, n+1}$ (or $\pt$ for $n=1$), properness is automatic. It remains to demonstrate acyclicity of $\Omega^k_{log}$. We give a proof using Hodge theory, using interactions between the Hodge and weight filtrations (we note that more explicit proofs of this computation in the cohomology of coherent sheaves are possible. See also section 3 of \cite{getzler} for a related computation). We say that a (smooth but not necessarily proper) algebraic variety $X$ (with trivial log structure) is \emph{$2$-pure} if the associated graded $Gr_W^k(H^*(X, \cc))$ is purely in cohomological degree $\frac{k}{2}$ (and in particular, is trivial for $k$ odd). The quintessential example is $X = \gg_m,$ which has $H^1(\gg_m) = \cc(1)$ the Tate Hodge module, pure of weight $2$. In fact all $2$-pure varieties can be seen to be of Tate type.\footnote{This definition can be generalized to a notion of $\alpha$-purity for any rational number $\alpha$. In particular $1$-pure schemes are schemes with $Gr_W^k(H^*)$ purely in cohomological degree $k$, and include all smooth proper varieties. See also \cite{ch_mixed}.}

We begin by proving that a \emph{smooth} proper log variety that compactifies a $2$-pure variety is proper acyclic. 

\begin{lm}\label{2pureacyc}
Suppose that $U$ is a $2$-pure smooth algebraic variety over $\cc$ and $X$ is a normal-crossings compactification of $U$ with $D = X\setminus U$. Then $$\X : = (X, D)_{log}$$ is acyclic.
\end{lm}
\begin{rmk} In fact, it is possible to extend this statement to $\Omega$-smooth log varieties, though we do not need the full generality here. Namely, there is a way to associate motives more generally to log varieties (explained for example in upcoming work of Vologodsky et al, \cite{vologodsky}), and it is possible to show that an $\Omega$-smooth log variety is proper acyclic if and only it is proper and its motive is $2$-pure. 
\end{rmk}
\begin{proof}
There is a weight spectral sequence for $H^*(\X)$ built out of the cohomology groups of $X$ and the closed strata of $D$, with $E1$ term as follows:
\[
\begin{tikzcd}
\ddots& \vdots           & \vdots             & \vdots   \\
\ldots& H^0(D^2) \arrow[r, "d_1"] & H^2(D^1) \arrow[r, "d_1" ]& H^4(X)    \\
\ldots& 0                 & H^1(D^1) \arrow[r, "d_1"] & H^3(X)       \\
\ldots& 0                 & H^0(D^1) \arrow[r, "d_1"]  & H^2(X)         \\
\ldots& 0                 & 0                  & H^1(X)          \\
\ldots& 0                 & 0                  & H^0(X)        ,
\end{tikzcd}
\]
for $D^k : = \sqcup \tilde{D}^{n-k}_i$ is the disjoint union of normalizations of irreducible components of closed codimension-$k$ strata of $D$, and with differential $d_1$ a Gisin differential. The weight filtration splits on the $E1$ term, with $E_1^{p,q} = H^{p-2q}(D^p)$ in weight $p$; so $2$-purity is equivalent to vanishing of all terms except on the $E\infty$ page except $E_\infty^{2p, p}$ (note that the spectral sequence degenerates at the $E_2$ page, hence the vanishing is also true on $E_2$). The weight filtration is compatible with the Hodge filtration on columns, with $F_{Hodge}^p(D^q)$ concentrated in $H^{\ge p}.$ Thus $2$-purity implies that $F_{Hodge}^{\ge 1} = 0,$ i.e., the Hodge filtration is concentrated in degree $0$. Now the associated graded $Gr_{Hodge}^p(U)$ is computed by the complex $(H^p(\Omega^*_U), d_{dR}),$ which can be computed on in terms of log forms in a normal-crossings compactification. Hodge-to-de Rham degneration then guarantees that $H^{\ge 1}\Omega^*(X,D)_{log} = 0.$ 
\end{proof}
The advantage of the weight filtration over the Hodge filtration that is relevant to our case is its good behavior in families. In particular the weight filtration behaves ``flatly'' in families, and we have the following lemma.
\begin{lm}\label{purity_descent}
Suppose $\pi:E\to B$ is a smooth family of schemes over a smooth base $B$ (over $\cc$). Suppose that $B$ is $2$-pure and a fiber $F\subset E$ over some point of $B$ is $2$-pure. Then $E$ is $2$-pure.
\end{lm}
\begin{proof}
This follows from appropriate compatibility of weight filtrations with the Serre spectral sequence.
\end{proof}
\begin{cor}\label{purity_moduli}
The spaces $\M_{0,n}$ are $2$-pure.
\end{cor}
\begin{proof}
Indeed, $\M_{0,3} = \pt$ is pure acyclic and there is a smooth fibration $\M_{0,n+1}\to \M_{0,n}$ with fibers isomorphic to $n$ times punctured $\pp^1$ (which is $2$-pure as $H^1(\pp^1\setminus D) \cong \cc^{d-1}(1),$ for $D$ a collection of $d\ge 3$ distinct points), whose $H^1$ motive is isomorphic to $\cc(1)^{d-1}$, of weight $2$. \end{proof}
This implies that the moduli space of marked genus zero log curves $(\bar{\M}_{0,n},D)_{log}$ is proper acyclic. To deduce a result for $\fldlog_{n} = (\bar{\M}_{0,n+1},D, \underline{L})_{log}$ we need to deal with the additional line bundles $L_1,\dots, L_n.$ Define the smooth log variety $$\pp(\bar{\M}_{0,n+1}, \underline{L})$$ to be the (total space of the) $(\pp^1)^{n+1}$-bundle $\pp(L_1)\times \dots\times \pp(L_n)$ which compactifies the total space of $L_0\oplus \dots \oplus L_n$. We introduce a log structure $$\pp(\bar{\M}_{0,n+1}, \underline{L})_{log} = \left(\pp(\bar{\M}_{0,n+1},\underline{L})_{log}, \pp D\right),$$ where $\pp D$ is the union of the preimage of $D\subset \bar{\M}_{0,n+1}$ and the union of the $0$-section and the $\infty$-section of each $\pp^1$-bundle. We have $$\iota: (\bar{\M}_{0,n+1}, D, \underline{L})_{log}\hookrightarrow \pp(\bar{\M}_{0,n+1}, \underline{L})_{log}$$ embedded as the induced log structure on the simultaneous zero section of all $\pp^1$-bundles. Now note that the embedding $\pt_{log}\hookrightarrow (\pp^1, 0\sqcup 1)_{log}$ embedded as the log structure at $0$ induces an isomorphism on global sections $\Omega^*$ (with both spaces proper acyclic). Taking a tensor power, we deduce the same for the embedding $\pt_{log}^{n+1}\subset (\pp^1, 0\sqcup \infty)_{log}^{n+1},$ hence arguing fiberwise we see that the map $\iota$ above induces an isomorphism on each $H^p\Omega^q$, and in particular preserves the acyclicity property. Thus it remains to prove that $\pp(\bar{\M}_{0,n+1}, \underline{L})_{log}$ is pure acyclic. Since the open scheme $\pp(\bar{\M}_{0,n+1}, \underline{L})\setminus \pp D$ is a $\gg_1^{n+1}$-bundle over $\M_{g,n}$ we see that it is $2$-pure by applying Lemma \ref{purity_descent} to \ref{purity_moduli}, hence done with Theorem \ref{operad_pa_property}. 
\end{proof}

\section{From framed little disks to little disks}\label{sec:ld}

The formality problem was first posed and first proven for the $E_2$ operad, equivalent to the operad $\LD$ of ordinary (i.e., not framed) little disks. The operads $\LD$ and $\FLD$ are closely related: indeed, $\FLD$ is a semidirect product of $\LD$ and $S^1.$ One aspect of this relationship is a fiber square of operads of topological spaces (defined below), as follows. 
\begin{equation}\label{eq:fld-square}
\begin{tikzcd}
\LD \arrow[r] \arrow[d] & \FLD\arrow[d, "\theta"]\\
\t{Comm} \arrow[r, "i"]& \t{Comm}^{S^1}.
\end{tikzcd}
\end{equation}
Here $\t{Comm}$ is the \emph{commutativity operad} (the terminal object in the category of operads), uniquely defined by $\t{Comm}_n = \pt$ for all $n$. It is so named because the category of algebras over this operad (viewed as a unital operad) is the category of commutative monoids. For any (not necessariy commutative or unital) topological monoid $G$, there is a ``$G$-equivariant commutativity'' operad $\t{Comm}^G,$ with category of algebras equal to the category of $G$-equivariant monoids. This operad is defined by $\t{Comm}^G_n : = G^n,$ with composition \[(g_1,\dots, g_n)\circ_i (g'_1, \dots, g'_k) : = (g_1,\dots, g_{i-1}, g_i g'_1, g_i g'_2, \dots, g_i g'_k, g_{i+1},\dots, g_n).\] The map $i:\t{Comm} \to \t{Comm}^{S^1}$ is the map on operads induced by the unique map of groups $1:\{e\}\to S^1$. Explicitly, it is defined by $i_n(*) = (1,1,\dots, 1)\in \t{Comm}^{S^1}_n.$ 

Recall that $\FLD_n$ classifies the data of a collection of $n$ nonoverlapping maps $\iota_1,\dots, \iota_n$ from closed disks $\dd^2$ to a single $\dd^2$ which are complex homotheties (i.e., compositions of a translation, scaling and rotation). Such a collection is determined by a collection of triples $(z_i, r_i, \theta_i)$ for $z_i\in D^1\subset \cc$ the center of the $i$th disk, $r_i\in \rr$ its radius and $\theta_i\in S^1$ its angle of rotation of the complex homothety $\iota_i$. 

Let $\theta: \FLD_n\to \t{Comm}^{S^1}_n$ be the map recording the angles, $(\theta_1,\dots, \theta_n)\in (S^1)^n.$ This is a map of operads. This defines the three lower right objects and maps of the square (\ref{eq:fld-square}). Now the operad $\LD\subset \FLD$ has spaces of operations $\LD_n\subset \FLD_n$ given by tuples of nonoverlapping \emph{real} homotheties $\iota_n:\dd^2\to \dd^2$, equivalently, elements of $\FLD$ with ``framing'' angles $\theta_1,\dots, \theta_n = 0.$ For each $n$ we thus have $\LD_n = \theta^{-1}(1,1,\dots, 1)\subset \FLD_n,$ verifying the commutativity and the pullback property of the diagram (\ref{eq:fld-square}).

Note that each map $\FLD_n\to (S^1)^n$ is a Hurewicz (therefore also a Serre) fibration, and so (since fibrancy for operads is inherited from spaces, see \cite{berger-moerdijk}), the diagram (\ref{eq:fld-square}) is a homotopy basechange diagram. We would like to get Hodge splitting properties for the diagram $\LD$ by re-interpreting this diagram in a logarithmic context, though there are some new complications here as we shall see. First we observe that the right column $\FLD\to \t{Comm}^{S^1}$ of (\ref{eq:fld-square}) has a logarithmic analog. Namely, recall (section \ref{sec:forgetful}) that for $i = 1,\dots, n,$ we have maps $\theta^{log}_i:\FLD_n\to \FLD_1.$ The space $\FLD_1 = \pt_{log}$ has the structure of a (non-unital) monoid in the category of log schemes, hence induces an ``equivariance'' operad and the maps 
\[\theta^{log} : = (\theta_1^{log},\dots, \theta_n^{log}): \fldlog_n\to \pt_{log}^n\] 
combine to a map of operads $\fldlog\to \t{Comm}^{\pt_{log}}$. A direct comparison (see Appendix) shows that after K-N realization, this map of operads is related (by a pair of quasiisomorphisms of maps of operads) to the map of topological operads $\theta:\fldlog\to \t{Comm}^{S^1}.$ We would like to draw a diagram of log operads analogous to the lower right three entries of (\ref{eq:fld-square}), and define an operad ``$\fldlog$'' as the pullback, but we run into a problem. Namely, $\pt_{log}$ is non-unital and indeed, there is no map $\pt\to \pt_{log}$, hence the bottom row of the diagram cannot be interpreted in the log category. In the paper \cite{ffc}, this problem is resolved by moving to a more flexible motivic category; however in this paper we use a more concrete solution, involving the de Rham complex of sheaves in the Kato-Nakayama realization. 

We begin by replacing this diagram with a quasiisomorphic one: namely, let $\rr(1)$ be the group isomorphic to $\rr,$ but understood as the group of purely imaginary complex numbers. The map $\exp:\rr\to S^1$ induces a map $\t{Comm}^\rr\to \t{Comm}^{S^1}.$ Define the operad $\tilde{\LD}$ to be the pullback of the diagram 
\begin{equation}\label{eq:ldtilde-square}
\begin{tikzcd}
\tilde{\LD} \arrow[r] \arrow[d] & \FLD\arrow[d, "\theta"]\\
\t{Comm}^\rr \arrow[r, "\iota"]& \t{Comm}^{S^1}.
\end{tikzcd}
\end{equation} 
(Geometrically, a point of $\tilde{\LD}_n$ classifies a point of $\FLD$ together with a homotopy class of paths in $S^1$ from each of the angles $\theta_i$ to $1\in S^1.$) The map $\pt\to \rr$ induces the homotopy equivalence of operads $\LD\to \tilde{\LD}.$  

Now after taking cochains, the last diagram can be compared to a diagram of logarithmic origin. Namely, recall that for $\X$ a logarithmic variety, $\X^{top}$ is a locally ringed space with ring of ``log holomorphic'' functions $\oo^{top}$. The sheaf $\oo^{top} = : \Omega^0_{top}$ is part of a complex \[(\Omega^*_{top}(\X), d) : = \Omega^0_{top}\to \Omega^1_{top}\to \dots \to \Omega^{n+d}_{top}\] (here $n+d$ is the ``log dimension''), which, assuming $\X$ is $\Omega$-smooth, resolves the sheaf $\cc^{top}$ of locally constant functions on $\X^{top}.$ For $\X = \pt_{log},$ let us choose a basepoint $\vec{1}\in \pt_{log}^{top}$ (the point of the exceptional fiber of the real blowup of $\cc$ in the direction of $1\in \cc$), and let $\tilde{\pt}_{log}^{top}$ be the universal cover with respect to this basepoint, canonically isomorphic to $\rr(1).$ Then the complex $\Omega^0(\pt_{log}^{top})\to \Omega^1(\pt_{log}^{top})$ has a lift to $\tilde{\pt}_{log}^{top},$ which we denote \[\Omega^0(\tilde{\pt}_{log}^{top}) \to \Omega^1(\tilde{\pt}_{log}^{top}).\]
Since this complex resolves $\cc_{\rr(1)},$ its global sections are one-dimensional, spanned by the unit global section $1\in \Omega^0(\tilde{\pt}_{log}^{top}),$ and there is no higher cohomology. 
\begin{rmk}
In fact, the complex of sheaves \[\Omega^0(\tilde{\pt}_{log}^{top}) \to \Omega^1(\tilde{\pt}_{log}^{top})\] is quite simple. $\Omega^0$ is the constant sheaf with fiber the polynomial algebra $\cc[\log]$ for $\log$ a variable (corresponding to the log function on the universal cover of $\cc^*$) and $\Omega^1 = d\log \cdot \cc[\log].$
\end{rmk}
Let $$\tilde{\LD}^{log, top}$$ be the covering of the Kato-Nakayama operad $\fldlog^{top}$ which fits into the pullback diagram 
\begin{equation}\label{eq:flc-square}
\begin{tikzcd}
\tilde{\LD}^{log, top} \arrow[r] \arrow[d] & \fldlog^{top}\arrow[d, "\theta^{log,top}"]\\
\t{Comm}^{\tilde{\pt}_{log}^{top}} \arrow[r, "\iota"]& \t{Comm}^{\pt_{log}^{top}}.
\end{tikzcd}
\end{equation}
Define the complex of sheaves $\Omega^*(\tilde{\LD}^{log, top}_n)$ to be the pullback of the complex of Kato-Nakayama log differential forms $\Omega^*(\fldlog^{top}_n)$ on the topological space $\fldlog^{top}_n$ to its cover. Note that this complex once again resolves the constant sheaf $\cc$ on the topological space $\tilde{\LD}^{log, top}_n.$ 
\begin{rmk}\label{rmk:fld_hodge}
The operad $\tilde{\LD}^{log,top}$ with the complex of logarithmic forms $\Omega^*(\tilde{\LD}^{log, top})$ is a natural ``analytic geometry'' home for the mixed Hodge structure on the operad of chains $C_*\LD$ given by Tamarkin's construction \cite{tamarkin_action} (see also \cite{ch_mixed}): in particular, both the Hodge and the weight filtration are clearly visible in this picture. 
Moreover, the complex of sheaves $\Omega^*(\tilde{\LD}^{log, top})$ has naturally a rational lattice (lifting the rational structure on $\Omega^*(\fldlog)$), giving naturally a de Rham lattice $C^*_{dR}(\LD).$ It can be checked that this is the same lattice as the one given in \cite{ffc}.
\end{rmk}

Applying the \v{C}ech cochains functor $C^* = : C^*_{\cech}$, we obtain the following diagram of cooperads in the category of cdga's:
\begin{equation}\label{eq:cdga-square}
\begin{tikzcd}
C^*(\tilde{\LD}^{log, top}, \Omega^*_{log, top}) & \arrow[l] C^*(\fldlog^{top}, \Omega^*_{log, top})\\
C^*(\t{Comm}^{\tilde{\pt}_{log}^{top}},\Omega^*_{log, top}) \arrow[u] & \arrow[l] C^*(\t{Comm}^{\pt_{log}^{top}},\Omega^*_{log, top})\arrow[u].
\end{tikzcd}
\end{equation}
Since each $\Omega^*_{log, top}$ is a resolution of a constant sheaf, and the diagram of spaces is a fibration, we see that this is a homotopy pushforward diagram in the category of cdga's for each space of operations (therefore also a homotopy pullback diagram in the category of co-operads of connective cdga's). Now consider the following diagram of triples of complexes. 
\begin{equation}\label{eq:triple-square}
\hspace*{-2.5cm}
\begin{tikzcd}
C^*(\t{Comm}^{\tilde{\pt}_{log}^{top}},\cc) \arrow[d]& \arrow[l] C^*(\t{Comm}^{\pt_{log}^{top}},\cc) \arrow[r]\arrow[d]& C^*(\fldlog^{top}, \cc)\arrow[d]\\ 
C^*(\t{Comm}^{\tilde{\pt}_{log}^{top}},\Omega^*_{log, top}) & \arrow[l] C^*(\t{Comm}^{\pt_{log}^{top}},\Omega^*_{log, top}) \arrow[r]& C^*(\fldlog^{top}, \Omega^*_{log, top})\\
\cc \arrow[u, "\iota"] & \arrow[l] H^*(\t{Comm}^{S^1},\cc)\arrow[r]\arrow[u, "\alpha"]& H^*(\FLD, \cc).\arrow[u, "\beta"]
\end{tikzcd}
\end{equation}
The maps between the first and second row are given by the map of complexes of sheaves $\cc\to \Omega^*_{log, top}$ which is a quasiisomorphism (by the de Rham resolution property stated above), the maps $\alpha, \beta$ between the bottom two rows are the formality maps of Theorem \ref{operad_pa_property} and $\iota$ is the embedding of constant functions. This is easily seen to be a commutative diagram with all vertical maps quasiisomorphisms. Now note that a triple of connective cdga's $R'\from R\to R''$ is ``pullback-exact'', i.e. satisfies the property that \[
\begin{tikzcd}
R\arrow[r]\arrow[d]&R'\arrow[d]\\
R''\arrow[r]&R'\otimes_R R''
\end{tikzcd}
\]
is a homotopy pushforward diagram, if either $R\to R'$ or $R\to R''$ is cofibrant in the Quillen model structure. For the top two rows, the map on the left is cofibrant (since it is induced by pullback of sections for a covering space), and for the bottom diagram, the map on the right is cofibrant since the map of (ordinary) topological operads $\FLD_n\to (S^1)^n$ is a fibration. Thus taking cdga pushout of each row gives a new triple of quasiisomorphisms of operads of cdga's, with the bottom pushout isomorphic to $H^*(\LD)$ and the top pushout mapping quasiisomorphically to $C^*(\tilde{\LD}^{log, top})$ which is related by a pair of quasiisomorphisms of topological operads to $C^*(\FLD).$ 

\section{Applications and extensions}

Here we sketch out very briefly several applications and extensions of the results of this paper and make some conjectures. Arguments in this section are sketches rather than full proofs.

\subsection{Integral splitting}
Our spaces $\fldlog_n$, the composition maps $\circ_i$ and the $\Sigma_n$ actions involved in the operad structure are all defined and $\Omega$-smooth over $\zz.$ Let $A^*_n$ be the complex \[A^*_n = \big(H^0\Omega^0_\zz(\fldlog_n)\to H^0\Omega^1_\zz(\fldlog_n)\to \dots\to H^0\Omega^{2n-1}(\fldlog_n)\big).\] Smoothness implies in particular that this is a complex of free $\zz$-modules which are submodules in the $\qq$-basechange; our formality result over $k = \qq$ then implies that the complexes $A^*_n$ are formal. By standard functoriality and symmetric monoidicity of $H^0$ (for connective complexes), we obtain a cooperad of graded spaces $A^*,$ which is an integral lattice in $H^*(\fldlog).$ The ``truncation'' maps $H^0(\Omega^k)\to C^*(\Omega^k)$ then define a map of co-operads of $\zz$-complexes \[A^*\to C^*_{dR, \zz}(\fldlog).\] Dualizing, we obtain a map of \emph{operads} \[C_*^{dR}(\fldlog, \zz): = \big(C^*_{dR}(\fldlog,\zz)\big)^\vee\to (A^*)^\vee;\] this implies that given any algebra over the formal $\zz$ \emph{operad} $(A^*)^\vee$ one obtains canonically an operad over the ``integral de Rham chains'' of $\fldlog,$ an integral version of de Rham splitting. It is natural to ask two questions.
\begin{question}
Is the ``formality map'' $A^*\to C^*_{dR,\zz}(\fldlog)$ a quasiisomorphism?
\end{question}
\begin{question}
Is the co-operad $A^*$ isomorphic (non-canonically) to the integral cohomology co-operad $H^*(\FLD)$, i.e., dual to the integral BV operad? 
\end{question}
The results of this paper imply that both of these are true after
extending coefficients to $\qq$, but they may well have torsion
obstructions integrally. Note that, as $C^*_{dR, \zz}(\fldlog)$
might not be quasiisomorphic to $C^*(\FLD, \zz)$ (though see the next
question), both of these may be true without implying that the
topological chains operad $C_*(\FLD, \zz)$ is formal.

\subsection{Prismatic cohomology} 
For a smooth and proper scheme $X$ over $\zz_p,$ there is a \emph{Prismatic cohomology} theory (\cite{bhatt-scholze}, \cite{bms}) with complex of cochains $C^*_{prism}(X, A_{inf})$ with coefficients in Fontaine's period ring $A_{inf}$ which interpolates at different points of $\t{Spec}(A_{inf})$ between characteristic-$p$ de Rham cohomology $C^*_{dR, \bar{\ff}_p}(X)$ and \'etale cohomology $C^*_\text{\'et}(X_{\qqbar_p}, \bar{\ff}_p)$ (equivalent to Betti cohomology with $\bar{\ff}_p$-coefficients). 
\begin{question}
Does there exist a Prismatic cohomology theory $C^*_{prism}(\fldlog_{\zz_p})$ over $A_{inf}$ which interpolates between \'etale and de Rham cohomology in an analogous way? 
\end{question}
This would follow from a sufficiently powerful \emph{logarithmic} $p$-adic Hodge theory for $\Omega$-smooth log varieties. Some steps towards such a theory are taken by T.\ Koshikawa and K. Cesnavicius in \cite{koshikawa} and \cite{ck_log}; see also \cite{log_coh}.

\subsection{Comparison with other rational de Rham theories}
The canonical splittings for $C_*(\LD,\cc)$ and $C_*(\FLD,\cc)$ constructed here are particularly interesting compared to previously known splittings, because they are the first explicitly constructed splittings which are compatible with a rational structure on $C_*(\LD,\cc)$; namely, the de Rham rational structure. This structure (or rather its dual, $C^*_{dR}$) was first constructed as part of a mixed Hodge structure in the paper \cite{ch_mixed}, where it was observed to follow from the Grothendieck-Teichm\"uller action discovered in \cite{tamarkin_action} (the corresponding action on the framed operad $\FLD$ follows from \cite{severa}). Another, log geometric, interpretation for this rational structure was given in \cite{ffc}. We give here without proof two other places where the rational lattice $C^*_{dR}(\LD, \qq)$ should appear in a canonical way.
\subsubsection{$\FLD$, homotopy pushout and moduli of nodal curves}
In the paper \cite{dc} (see also \cite{dv}), Drummond-Cole shows that the topological Deligne-Mumford-Knudson operad $\DMK$ (with $\DMK_n = \bar{\M}_{0,n+1}(\cc)$) is homotopy equivalent to the ``homotopy trivialization'' of the sub-operad of $1\to 1$ operations in $\FLD$, i.e. to the homotopy pushout (in a standard model structure on operads) of the diagram 
\[
\begin{tikzcd}
S^1\arrow[r]\arrow[d]&\FLD\\
\pt, &
\end{tikzcd}
\]
where in the left hand column for $G$ a group (more generally, a monoid), we interpret $G$ as an operad by taking 
$$G_{n\to 1} : = \begin{cases}
G, & n = 1\\
\emptyset, &\t{else.}
\end{cases}
$$
This implies an analogous pushout diagram on the level of chains. Since the operad $\DMK$ is algebro-geometric and defined over $\qq$ (indeed, also over $\zz$), there is an evident de Rham lattice $C_*^{dR}(\DMK)$ (dual to rational de Rham cochains) in $C_*(\DMK, \cc).$ Techniques of \cite{dc} and \cite{oancea_vaintrob} can be extended to a log geometric context to show that $C_*^{dR}(\DMK,\qq)$ in fact fits in a canonical pushout diagram 
\[
\begin{tikzcd}
C_*^{dR}(\gg_m,\qq)\arrow[r]\arrow[d] & C_*^{dR}(\fldlog,\qq)\arrow[d]\\
\qq \arrow[r]&     \arrow[ul, phantom, "\lrcorner", very near start]
C_*^{dR}(\DMK)
\end{tikzcd}
\]
There is an explicit combinatorial computation of pushouts of derived operads (using the Boardman-Vogt resolution), and this seems to be the first explicit construction of a quasiisomorphism between $C_*^{dR}(\DMK)$ and an explicitly constructed combinatorial operad. The dual of the resulting quasiisomorphism is compatible with product structure, and it seems to be a new result to give an explicit combinatorial dg model (i.e., quasiisomorphic via explicit maps) for the cdga's $C^*(\bar{\M}_{0, n+1}, \qq)$ which is directly compatible with all boundary maps.


\section{Appendix}\label{sec:appendix}\label{appendix}
In this appendix we prove Theorem \ref{bettitodr}. In fact we prove it in a slightly generalized context, with normal-crossings schemes replaced by $\Omega$-smooth varieties. In particular, we discuss the notion of $\Omega$-smoothness in section \ref{sec:dsmooth}. Our proof is a direct transposition of the proof in Kato and Nakayama's \cite{kato-nakayama} into an $\infty$-categorical language. The main theorem is as follows.
\begin{thm}\label{bettitodr2}
Let $\t{Acyc}_{NC}$ be the category of acyclic normal-crossings log schemes. Then there exists a functorial and symmetric monoidal chain of quasiisomorphisms between the functors $C^*_{sing}$ and $C^*_{dR}.$ 
\end{thm}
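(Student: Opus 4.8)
The plan is to transpose Kato and Nakayama's sheaf-theoretic comparison \cite{kato-nakayama} into a functorial, symmetric monoidal statement by inserting the \emph{topological} log de Rham complex as an intermediary between singular cochains and algebraic de Rham cochains, and then upgrading each comparison quasiisomorphism to a natural transformation of (lax) symmetric monoidal functors $\t{Acyc}_{NC}\to \t{dg-Vect}$. First I would recall the central object of \cite{kato-nakayama}: on the Kato-Nakayama space $\X^{top}$ of an $\Omega$-smooth log scheme there is a complex of sheaves $\Omega^*_{top}$ — the pullback along the canonical map $\tau:\X^{top}\to \X^{an}$ of the analytic log de Rham complex, tensored up to the sheaf $\oo^{top}$ of log-holomorphic functions — together with the fundamental resolution quasiisomorphism $\cc_{\X^{top}}\to \Omega^*_{top}$. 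This is exactly where $\Omega$-smoothness (hence the normal-crossings hypothesis) enters. Since $\X^{top}$ is locally contractible (a manifold with corners, or a real-torus bundle over one), singular cochains compute $R\Gamma(\X^{top},\cc)$, and the resolution identifies this with $R\Gamma(\X^{top},\Omega^*_{top})$.

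Second, I would push forward along $\tau$. Kato-Nakayama's computation of $R\tau_*$ identifies $R\Gamma(\X^{top},\Omega^*_{top})$ with the analytic log de Rham cohomology $R\Gamma(\X^{an},\Omega^*_{\X^{an}})$, and log GAGA for the finite-type schemes at hand identifies the latter with $H^*_{dR}(\X)=R\Gamma(X,\Omega^*_\X)$. The acyclicity hypothesis is precisely what lets me replace this hypercohomology by the honest global-sections complex $C^*_{dR,acyc}(\X)=(H^0\Omega^*(\X),d)$: since each $\Omega^k_\X$ has no higher cohomology, the hypercohomology spectral sequence collapses and $R\Gamma(X,\Omega^*_\X)\simeq (\Gamma(X,\Omega^*_\X),d)$. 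At the level of cohomology this already recovers the classical Kato-Nakayama isomorphism $H^*_{sing}(\X^{top},\cc)\cong H^*_{dR}(\X)$; the remaining work is to make the whole zig-zag functorial and symmetric monoidal at the chain level.

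Third, I would replace each occurrence of ``$R\Gamma$'' by a functorial model — a Godement resolution, or a functorial \v{C}ech complex over the acyclic cover implicit in $\t{Acyc}_{NC}$, normalized via the Thom-Whitney construction of Navarro-Aznar — so that all four complexes ($C^*_{sing}(\X^{top})$, the topological de Rham global sections, the analytic de Rham global sections, and $C^*_{dR,acyc}(\X)$) become values of honest functors $\t{Acyc}_{NC}\to \t{dg-Vect}$ and the comparison maps become natural transformations that are objectwise quasiisomorphisms. Symmetric monoidality then rests on three compatible K\"unneth-type facts, which I would verify are intertwined by each connecting map: that the Kato-Nakayama realization is monoidal, $(\X\times\Y)^{top}\cong \X^{top}\times\Y^{top}$; that the log de Rham complex satisfies K\"unneth, $\Omega^*_{\X\times\Y}\cong \Omega^*_\X\boxtimes\Omega^*_\Y$; and that singular cochains carry their Eilenberg-Zilber lax monoidal structure.

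The hard part will be this last step: promoting the objectwise isomorphism of \cite{kato-nakayama} to a \emph{coherent} chain of symmetric monoidal quasiisomorphisms of functors. The tension is that the de Rham side is (strictly) commutative and monoidal while singular cochains are only $E_\infty$/lax monoidal, so the two cannot be compared by a single strict map; instead I would work throughout in the $\infty$-category of lax symmetric monoidal functors to $\t{dg-Vect}$, exhibit the resolution map $\cc\to\Omega^*_{top}$ and the $R\tau_*$/GAGA comparison as morphisms of such functors which are objectwise equivalences, and invoke that an objectwise equivalence of symmetric monoidal functors is an equivalence in the functor $\infty$-category. The bulk of the bookkeeping is concentrated in checking the coherences for the Thom-Whitney normalization and for the comparison across the three ringed structures $\X$, $\X^{an}$, $\X^{top}$ simultaneously, so that the monoidal structure maps commute with every leg of the zig-zag up to coherent homotopy.
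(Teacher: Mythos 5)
Your proposal is correct and follows essentially the same route as the paper: the same zig-zag $\underline{\cc}\to\Omega^*_{top}\leftarrow\Omega^*_{an}\leftarrow\Omega^*_{alg}$ through the Kato--Nakayama Poincar\'e lemma, the $R\tau_*$ computation, and GAGA, with acyclicity used to identify hypercohomology with the global-sections complex and with monoidality handled by working with lax symmetric monoidal functors to complexes. The only cosmetic difference is that the paper packages the functoriality via a Grothendieck-construction category of pairs (space, sheaf of complexes) with a symmetric monoidal derived global sections functor, where you propose Godement/Thom--Whitney resolutions; these accomplish the same thing.
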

We assume acyclicity here because it is more complicated to give a functorial model for the de Rham cochains functor for non-acyclic log schemes as $H^0(X, \Omega^0_{log})\to H^0(X, \Omega^1_{log})\to \ldots.$ On the infinity-categorical level, acyclicity can be dropped and follows from the above theorem by gluing, as any log variety can be glued out of acyclic ones (since a log variety with affine underlying scheme is acyclic).

\subsection{$\Omega$-smoothness.}\label{sec:dsmooth}
This section serves as a notational disambiguation between the notion of smoothness in this paper and classical results about log varieties. We give a definition of the notion of $\Omega$-smoothness, a generalization of the notion of smoothness for log varieties. The notion was introduced in \cite{kato-nakayama}, but was not named there (rather, it is given as a technical condition for the Betti-to de Rham comparison result). We assume knowledge of standard log geometry, for example after \cite{ogus_logbook}. We give a definition in terms of toric varieties, and compare it to Ogus' notion of smoothness of \emph{idealized} log schemes.

Let $X_{\Delta}$ be an affine toric variety (with $\Delta$ a rational affine cone) and $X_{\Delta'}$ a toric subvariety corresponding to a face $\sigma$ of $\Delta,$ with $\Delta' = \Delta/\sigma$. We define $\X_{\Delta}$ to be the standard log structure on $\X$ (this is the initial log variety whose sheaf $L$ of monoids admits a map to the constant semigroup $M_{\Delta^\vee}$ of characters associated to $X$). We define the \emph{relative log toric variety}
\[\X_{\Delta, \Delta'}: = \X_{\Delta}\times_{X_\Delta} X_{\Delta'}.\] 

\begin{defi}\label{def:omega_smooth}
A log variety $\X$ with underlying space $X$ is called $\Omega$-smooth if it is \'etale locally modeled on $\X_{\Delta,\Delta'},$ i.e., for any geometric point $x\in X(\bar{k})$ there is an \'etale neighborhood $U$ of $x$ with $U$ isomorphic to an open in the relative log toric variety $\X_{\Delta, \Delta'}.$
\end{defi}
\begin{example}
The point $\pt_{log}$ is $\Omega$-smooth, since it is isomorphic to the relative toric log variety $\X_{\rr_{\ge 0}, \{0\}}$. 
\end{example}

From the point of view of differential forms, de Rham theory and $D$-modules, $\Omega$-smoothness is ``just as good'' (i.e., has the same good finite resolution properties) as log smoothness. In particular it is for this class of varieties that Kato and Nakayama proved their seminal Betti to de Rham comparison result in Theorem 0.2 (2) of \cite{kato-nakayama}. Kato and Ogus \cite{ogus_logbook} after him do not call such varieties \emph{smooth}, as this notion is for them a deformation-theoretic property, that holds only for log varieties locally modeled on toric varieties of type $\X_\Delta$ (in particular, with generically trivial log structure). 

However it is possible to relate $\Omega$-smoothness to a deformation-theoretic notion of smoothness. To express this in the language of Ogus' \cite{ogus_logbook}, we need to introduce as an intermediate step a further enlargement of the category of log schemes.

Let $\X$ be a log scheme with sheaf of functions $\oo,$ log monoid sheaf $\L$ (an \'etale sheaf of commutative unital monoids) and log exponent map $\alpha: \L\to (\oo, \cdot).$ 
\begin{defi}
An idealized structure on $\X$ is a choice of (not necessarily unital) monoid subsheaf $\I\subset \L$ such that
\begin{itemize}
\item $\I$ is an ideal, i.e., $\I \L \subset \I$
\item $\alpha(\I) = 0$.
\end{itemize}
\end{defi}

A log scheme $\X$ admits two canonical idealized structures: a trivial idealized structure, $\I = \emptyset$ (the terminal object, with the smallest ideal) and a \emph{tautological} idealized structure, $\I = \alpha^{-1}(0)$ (the initial object, with largest ideal). We will view ordinary log schemes as a subcategory of all log schemes via the latter functor (sending a log scheme to itself with tautological idealized structure). 

Now to Ogus, smoothness of a map $f:X\to Y$ is a liftability criterion for log thickenings $i:S\to T$ over $f.$ A log thickening is a certain kind of closed immersion $\X\to \Y$ of log schemes such that the map of underlying schemes $X\to Y$ is an infinitesimal thickening in the ordinary sense. For example, the embedding \[i:\pt_{log}\to \dd^1_{log},\] is (ind-) log formal for $\dd^1_{log} : = (\aa^1, 0)_{log}\times_{\aa^1}\dd^1$ is the log formal disk. In particular, the map $\pt_{log}\to \pt$ is not smooth in this sense since there is no way to fill in the dotted arrow in the diagram
\[\begin{tikzcd}
\pt_{log}\arrow[d, "id"]\arrow[r, "i"]&\dd^1_{log}\arrow[d]\arrow[dl, dashed]\\
\pt_{log}\arrow[r]&\pt
\end{tikzcd}\]

Now in the context of idealized log varieties, Ogus introduces a notion of \emph{idealized} log thickening which is more restrictive, in the sense that certain log thickenings of toric varieties cease to be idealized log thickenings with the tautological idealized structure. In particular, the map $\pt_{log}\to \dd_{log}$ is not an idealized thickening. He defines idealized smooth maps to be suitable maps of idealized log schemes which have the lifting condition for idealized thickenings. Since it is more restrictive for a map of log schemes (with tautological idealized structure) to be an idealized thickening, it is not surprising that it is easier for a map of log schemes (with tautological idealized structure) to be smooth. The log point and more general relative toric log varieties are all smooth as idealized log varieties. Conversely Ogus proves the following lemma, in terms of our definition \ref{def:omega_smooth}. 
\begin{lm}
A log scheme $\X$ is smooth as an idealized scheme (with tautological idealized structure) if and only if it is $\Omega$-smooth.
\end{lm}
\begin{proof}\label{ogus_omega}
This is Variant 3.3.5 or Corollary 3.3.4 of \cite{ogus_logbook} for $Y = \pt.$
\end{proof}
In particular, it follows that any normal-crossings scheme is $\Omega$-smooth. All results cited in section \ref{sec:log_disks} for $\Omega$-smooth $\X$ follow from corresponding results of \cite{ogus_logbook} for idealized smooth schemes.

\subsection{Betti and de Rham models and comparisons.}\label{sec:betti_to_dr}

Here we prove the Betti-to-de Rham comparison result, Theorem \ref{bettitodr2}, by repeating Kato and Nakayama's proof in \cite{kato-nakayama} with $\infty$-categorical language.

Let $\cat$ be the category of (small, presentable) $\infty$-categories. Let $\Shf^{\top}$ be the category obtained by the Grothendieck construction for the functor $\Shf:\top\to \cat$ which takes a topological space $T$ to the $\infty$-category of complexes of sheaves on $T$ (localized at quasiisomorphisms). Objects of $\Shf^{\top}$ are pairs $(T, \F)$ where $T$ is a topological space and $\F$ is a sheaf of complexes of vector spaces, and morphisms $\hom\big((T, \F), (T', \F')\big)$ are $$\bigsqcup_{\pi\in\t{Maps}(T', T)}\hom(\pi^*(\F), \F')$$ where $\pi$ ranges over the \emph{set} of maps of topologies and $\hom(\pi^*(\F), \F')$ is the $\infty$-groupoid of maps between complexes of sheaves. Note that if we impose suitable finiteness conditions on the topological space $T$, which are satisfied for all topologies we consider, this category can be obtained from the ordinary Grothendieck construction $1$-category of pairs $(T, \F)$ localized at maps $\pi:T'\to T, f:\pi^*\F\to \F'$ such that $\pi$ is a homeomorphism and $f$ a quasiisomorphism.

The category $\Shf^{\top}$ is symmetric monoidal, with symmetric monoidal structure given by $(T, \F)\times (T', \F') := (T\times T', \F\boxtimes \F').$ The functor of (derived) global sections gives a symmetric monoidal functor $R\Gamma: \Shf^{\top}\to D^b\vect$ to the category of complexes of vector spaces. (This functoriality of derived global sections is where $\infty$-categorical language gets used in an essential way.)

We will construct the Betti-to de Rham comparison by associating, in a functorial and symmetric monoidal way, four objects in $\Shf^{\top}$ to any log scheme, and relating them with maps that induce quasiisomorphisms on global sections.

The four pairs we consider are as follows.
\begin{itemize}
\item $(X^{alg},\Omega_{alg}^*)$, the Zariski topology on the classical scheme $X$ underlying $\X$, with complex of sheaves $\Omega^*_{alg}$ the algebraic log de Rham cohomology; 
\item $(X^{an}, \Omega_{an}^*)$, the analytic topology on the complex space $X(\cc),$ with complex of sheaves $\Omega^*_{an},$ the analytic log de Rham cohomology; 
\item $(\X^{top},\Omega_{top}^*)$ the standard topology on the Kato-Nakayama space $\X^{top} = \X(\cc_{KN}),$ with complex of sheaves $\Omega^*_{top}$ the complex defined by Kato and Nakayama in \cite{kato-nakayama} (there denoted $\omega_{log}$), and finally,
\item $(\X^{top}, \underline{\cc}),$ the constant sheaf on $\Omega^{top}.$
\end{itemize}
Note that $R\Gamma(X^{alg}, \Omega_{alg}^*)$ computes the algebraic de Rham cohomology (and agrees with our explicit model when $\X$ is acyclic) and the last $R\Gamma(\X^{top}, \underline{\cc})$ is quasiisomorphic to $C^*_{sing}(\X^{top})$ via a standard symmetric monoidal comparison, since $\X^{top}$ is locally contractible (for example, since it is homeomorphic to a finite CW complex).

These pairs, viewed as objects of the Grothendieck category $\Shf^{\top},$ are related by a sequence of maps as follows: 
\begin{align}\begin{split}\label{eq:quism}
(\X^{top},\underline{\cc})&\xrightarrow{(\one, i_{const})} \\ (\X^{top}, \Omega^*_{top})&\xleftarrow{(\tau, i^{top,an})} (X^{an}, \Omega^*_{an})\xleftarrow{(g, i^{an, alg})} (X^{alg}, \Omega^*_{alg}).
\end{split}\end{align}
The map $(\X^{top},\underline{\cc})\to (\X^{top}, \Omega^*_{top})$ is the identity on the topological space, with $\underline{\cc}$ mapping to $\Omega^*_{top}$ as constant functions of $\oo_{top} = \Omega_{top}^0$. It is a quasiisomorphism of complexes of sheaves by the so-called Kato-Nakayama Poincar\'e lemma, \cite{kato-nakayama} Theorem 3.8. (This in fact was what motivated the construction of $\X^{top}$.) The map of topologies $$\tau:\X^{top}\to X^{an}$$ is the Kato-Nakayama realization of the canonical projection $\X\to X$ and the map $X^{an}\to X^{alg}$ is the standard GAGA comparison which pulls back Zariski opens in $X^{alg}$ to the corresponding topological opens in $X^{an}$. The maps relating sheaves of differential forms are then defined in a standard way, using the standard basechange identifications, \[\Omega^*_{an} \cong g^*\Omega^*_{alg}\otimes_{g^*\oo_{alg}}\oo_{an}\] and
\[\Omega^*_{top} \cong \tau^*\Omega^*_{an}\otimes_{\tau^*\oo_{an}}\oo_{top}.\]

It is proven in \cite{kato-nakayama} (see also \cite{ogus_logbook}, Theorems V.1.4.7 and V.4.2.5) that the maps $(\tau, i^{top, an})$ and $(g, i^{an, alg})$ both induce adjoint natural transformations which are quasiisomorphisms to the derived pushforward, hence give quasiisomorphisms after taking global sections. For the pair $(\tau, i^{top, an})$ this follows from a certain fiber computation (which reduces to computing the cohomology of a certain local system) and for the pair $(g, i^{an, alg})$ this follows from standard GAGA results about holonomic $D$ modules. All three maps are evidently lax symmetric monoidal in the log scheme $\X$. Thus the maps on derived global sections induced by the three maps of (\ref{eq:quism}) are quasiisomorphisms of functors compatible with symmetric monoidal structure on the category of log schemes. They thus provide a canonical (up to contractible space of choices) quasiisomorphism between the symmetric monoidal DG functors $\Gamma\Omega^*$ and $C^*_{\t{Betti}}$ on the category $\t{Acyc}$ of acyclic log schemes. By standard rectifiability results on $\infty$-operads (see \cite{chh}), they imply also the existence of a canonical (in a model-theoretic sense) chain of symmetric monoidal isomorphisms between the two functors. 
\qed




\newpage

\end{document}